\newcommand{\Supp}{S}
\newcommand{\R}{\mathbb{R}}
\newcommand{\C}{\mathbb{C}}
\newcommand{\N}{\mathbb{N}}
\newcommand{\Z}{\mathbb{Z}}
\newcommand{\T}{\mathbb{T}}
\newcommand{\SL}{{\rm SL}}
\newcommand{\nbar}{\xoverline{n}}
\theoremstyle{plain}
\newtheorem{theorem}{Theorem}[section]
\newtheorem{proposition}{Proposition}[section]
\newtheorem{lemma}[proposition]{Lemma}
\theoremstyle{definition}
\newtheorem*{theorem*}{Theorem}
\theoremstyle{definition}
\newtheorem{remark}{Remark}[section]
\newtheorem{example}[theorem]{Example}
\numberwithin{equation}{section}
\newcommand{\abs}[1]{\left| #1 \right|} 
\newcommand{\norm}[1]{\left\|#1\right\|} 
\newcommand{\normtwo}[1]{
{\left\vert\kern-0.25ex\left\vert\kern-0.25ex\left\vert #1
    \right\vert\kern-0.25ex\right\vert\kern-0.25ex\right\vert} }
\newcommand{\bigo}{\mathcal{O}}
\newcommand{\ep}{\epsilon}
\newcommand{\la}{\lambda}
\newcommand{\om}{\omega}
\newsavebox\myboxA
\newsavebox\myboxB
\newlength\mylenA
\newcommand*\xoverline[2][0.75]{%
    \sbox{\myboxA}{$\m@th#2$}%
    \setbox\myboxB\null
    \ht\myboxB=\ht\myboxA%
    \dp\myboxB=\dp\myboxA%
    \wd\myboxB=#1\wd\myboxA
    \sbox\myboxB{$\m@th\overline{\copy\myboxB}$}
    \setlength\mylenA{\the\wd\myboxA}
    \addtolength\mylenA{-\the\wd\myboxB}%
    \ifdim\wd\myboxB<\wd\myboxA%
       \rlap{\hskip 0.5\mylenA\usebox\myboxB}{\usebox\myboxA}%
    \else
        \hskip -0.5\mylenA\rlap{\usebox\myboxA}{\hskip 0.5\mylenA\usebox\myboxB}%
    \fi}
\newcommand{\comp}{^{\complement}}
\newcommand{\U}{\mathscr{U}}
\newcommand{\ind}{\mathds{1}}
\newcommand{\B}{\mathscr{B}}
\newcommand{\dist}{{\rm dist}}
\newcommand{\Bscr}{\mathscr{B}}
\newcommand{\Bcal}{\mathcal{B}}
\newcommand{\X}{\mathcal{X}}
\newcommand\restr[2]{{
  \left.\kern-\nulldelimiterspace 
  #1 
  \vphantom{\big|} 
  \right|_{#2} 
  }}
\newcommand{\Prob}{\mathrm{Prob}}
\newcommand{\Ascr}{\mathcal{A}}
\newcommand{\Gscr}{\mathcal{G}}
\newcommand{\supp}{\mathrm{supp} \, }
\newcommand{\Cscr}{C^0}
\newcommand{\Qop}{\mathcal{Q}}
\newcommand{\Av}{\mathrm{Av}}
\newcommand{\alfa}{\mathfrak{a}}
\newcommand{\muh}{{\nu}}
\newcommand{\Xplus}{{X^+}}
\newcommand{\Sigmah}{{\Omega}}
\newcommand{\f}{\bm{f}}
\newcommand{\Lip}{\mathrm{Lip}}
\title[Mixed Random-Quasiperiodic Cocycles]{Mixed Random-Quasiperiodic Cocycles}
\date{}
\begin{document}

\author[A. Cai]{Ao Cai}
\address{Departamento de Matem\'atica and CMAFcIO\\
Faculdade de Ci\^encias\\
Universidade de Lisboa\\
Portugal
}
\email{acai@ptmat.fc.ul.pt}

\author[P. Duarte]{Pedro Duarte}
\address{Departamento de Matem\'atica and CMAFcIO\\
Faculdade de Ci\^encias\\
Universidade de Lisboa\\
Portugal
}
\email{pduarte@ptmat.fc.ul.pt}

\author[S. Klein]{Silvius Klein}
\address{Departamento de Matem\'atica, Pontif\'icia Universidade Cat\'olica do Rio de Janeiro (PUC-Rio), Brazil}
\email{silviusk@mat.puc-rio.br}

\begin{abstract}
We introduce the concept of mixed random - quasiperiodic linear cocycles. We characterize the ergodicity of the base dynamics and establish a large deviations type estimate for certain types of observables. For the fiber dynamics we prove the uniform upper semicontinuity of the maximal Lyapunov exponent. This paper is meant to introduce a model to be studied in depth in further projects.
\end{abstract}

\maketitle


\section{Introduction}\label{intro}

\medskip

Consider a compactly supported probability measure $\muh$ on the group $\SL_m (\R)$ of $m$ by $m$ matrices with determinant $1$ and let
$$\Pi_n = g_{n-1} \ldots g_1 \, g_0 \, $$
be the random multiplicative process driven by this measure, where $\{g_n\}_{n\in\Z}$ is an i.i.d. sequence of $\SL_m (\R)$ valued random variables with common law $\muh$. 

By Furstenberg-Kesten's theorem (see~\cite{FK}), the geometric average 
$\frac{1}{n} \log \norm{\Pi_n}$ converges $\muh$-a.s. to a constant $L_1 (\muh)$ called the maximal Lyapunov exponent of the process.

An important example of such a process comes from the study of the Anderson model, the discrete random Schr\"odinger operator on $\ell^2 (\Z)$ used in solid state physics to model one dimensional disordered systems (e.g. semiconductors with impurities). This operator is given by
$$(H \psi)_n := - \psi_{n+1} - \psi_{n-1} + w_n \psi_n \qquad \forall n \in \Z \, ,$$
where $\psi = \{\psi_n\}_{n\in\Z}  \in \ell^2 (\Z)$ and $\{w_n\}_{n\in\Z}$ is an i.i.d. sequence of real valued random variables. 
The corresponding Schr\"odinger (or eigenvalue) equation $H \psi = E \psi$ is equivalent to
$$\begin{pmatrix} \psi_{n+1} \\ \psi_n \end{pmatrix} =
\begin{pmatrix} w_n-E & -1 \\ 1 & 0 \end{pmatrix} \, 
\begin{pmatrix} \psi_{n} \\ \psi_{n-1} \end{pmatrix} \quad \forall n\in\Z \, .
$$
Thus it can be solved by means of transfer matrices $\Pi_n = g_{n-1} \ldots g_1 \, g_0$, where
$g_n := \begin{pmatrix} w_n-E & -1 \\ 1 & 0 \end{pmatrix} $
are i.i.d. random matrices.

The behavior of the corresponding Lyapunov exponent as a function of the energy $E$ (e.g. its positivity or its continuity) 
is directly pertinent to the study of the spectral properties of the Schr\"odinger operator.

\medskip

At the other end of the range of ergodic comportment lies the discrete quasiperiodic Schr\"odinger operator, which in solid state physics is employed in the description of two dimensional crystal layers immersed in a magnetic field. This operator is given by
$$(H (\theta) \, \psi)_n := - \psi_{n+1} - \psi_{n-1} + v_n (\theta) \,  \psi_n \qquad \forall n \in \Z \, ,$$
where $v_n (\theta) = v (\theta + n \alpha)$ for some continuous function $v$ on the torus $\T^d = (\R/\Z)^d$, rationally independent frequency $\alpha \in \T^d$ and phase $\theta \in \T^d$.
The corresponding Schr\"odinger equation $H (\theta) \, \psi = E \, \psi$ gives rise to the  deterministic (quasiperiodic) multiplicative process
$$\Pi_n = \begin{pmatrix} v_{n-1} (\theta)  -E & -1 \\ 1 & 0 \end{pmatrix} \ldots 
\begin{pmatrix} v_1 (\theta) -E & -1 \\ 1 & 0 \end{pmatrix} \, \begin{pmatrix} v_0 (\theta) -E & -1 \\ 1 & 0 \end{pmatrix}
\, .$$

Both of these types of multiplicative processes can be studied in the more general framework of linear cocycles. A linear cocycle over an ergodic system $(X, f, \rho)$ (referred to as the base dynamics) is a skew product map of the form
$$ X \times \R^m \ni (x, u) \mapsto F (x, u) = \left( f (x), A(x) u \right) \in X \times \R^m ,$$
where $A \colon X \to \SL_m (\R)$ is a measurable function (referred to as the fiber map). The iterates of $F$ are 
$F^n (x, u) = \left( f^n (x), A^n (x) u \right)$, where $A^n$ is the multiplicative process
$$A^n (x) = A (f^{n-1} x) \ldots A (f (x) ) \, A (x) \, .$$
Its maximal Lyapunov exponent is defined as before, by Furstenberg-Kesten's theorem, as the $\rho$-a.e. limit of
$\frac{1}{n} \, \log \norm{ A^n (x) } $. 

When the base dynamics is a Bernoulli shift on a space of sequences and the fiber map $A$ depends only on the zeroth coordinate of the sequence, its iterates $A^n$ encode a random multiplicative system. When the base dynamics is a torus translation, the iterates of the fiber map define a quasiperiodic multiplicative process.

\medskip

In this paper we introduce the notion of a mixed random-quasiperiodic multiplicative process. An important example of such system  is related to the study of the discrete Schr\"odinger operator with mixed random-quasiperiodic potential
$$(H \psi)_n = - \psi_{n+1} - \psi_{n-1} + \left( v (\theta + n \alpha) + w_n \right) \, \psi_n \quad \forall n \in \Z .$$

The  random part of the potential, given by the sequence $\{w_n\}_{n\in\Z}$ of i.i.d. random variables, may be regarded as a perturbation of the quasiperiodic part $\{v (\theta+n\alpha)\}_{n\in\Z}$. A natural question is then to understand the influence of this random noise on the behavior of the system. For instance, is the Lyapunov exponent of the quasiperiodic system stable under such random perturbations? This question was posed to us by Jiangong You and it motivated this and several subsequent projects.

\medskip

A quasiperiodic cocycle can be identified with a pair $(\alpha, A)$, where  $\alpha \in \T^d$ is an ergodic  frequency (which defines the base dynamics, a torus translation) and $A \colon \T^d \to \SL_m (\R)$ is a continuous function (which induces the fiber action). Let $\Gscr$ be the (metric) space of quasiperiodic cocycles. It turns out that $\Gscr$ has a natural group structure, and in fact $(\Gscr, \circ)$ is a topological group.  Given a compactly supported measure $\muh$ on $\Gscr$ and an i.i.d. sequence $\{\om_n\}_{n\in\Z}$ of random variables with values in the group $\Gscr$ and with common law $\muh$, we may interpret the random product of quasiperiodic cocycles
$$\Pi_n = \om_{n-1} \circ \ldots \circ \om_1 \circ \om_0$$
as a mixed random-quasiperiodic multiplicative process. 

Another (not completely equivalent) way of defining such a process is to regard it as the iterates of a certain kind of linear cocycle over a mixed base dynamics. The latter is a skew-product of a Bernoulli shift with a random translation. 

\medskip

This paper is the first in a series of works regarding such mixed processes. Its purpose is to introduce the main concepts and to establish some (technical) results, to be used later, a common theme thereof being a certain uniform behavior in the quasiperiodic variable $\theta$ (which is natural, given the unique ergodicity of the torus translation). 

The paper is organized as follows. In Section~\ref{base} we define the mixed random-quasiperiodic base dynamics, characterize its ergodicity (Theorem~\ref{ergodic charact}) and establish a large deviations type estimate for certain observables (Theorem~\ref{base ldt}). In Section~\ref{fiber} we formally introduce the concept of mixed random-quasiperiodic cocycle driven by a measure on the group of quasiperiodic cocycles and establish a uniform upper large deviations type estimate (Theorem~\ref{fiber upper ldt}). As a consequence, we prove that the maximal Lyapunov exponent is upper semicontinuous as a function of the measure, relative to the Wasserstein distance. 

In Section~\ref{roadmap} we outline some of the upcoming works on the models introduced here, leading up to the stability under random noise of the Lyapunov exponent of a quasiperiodic cocycle. The second and third authors are grateful to Jiangong You for posing this question, that proved very fruitful, and to Nanjing University for their hospitality during an event in 2018 where the conversation took place.

\section{The base dynamics}\label{base}
Let $(\Sigmah, \Bcal)$ be a standard Borel space. That is, $\Sigmah$ is a Polish space (a separable, completely metrizable topological space) and $\Bcal$ is its Borel $\sigma$-algebra. 

Let $\muh \in \Prob_c (\Sigmah)$ be a compactly supported Borel probability measure on $\Sigmah$. Regarding $(\Sigmah, \muh)$ as a space of symbols, we consider the corresponding (invertible) Bernoulli system $\left(X, \sigma, \muh^\Z \right)$, where $X := \Sigmah^\Z$ and $\sigma \colon X \to X$ is the (invertible) Bernoulli shift: for $\om = \{ \om_n \}_{n\in\Z}  \in X$,  
$\sigma \om  :=  \{ \om_{n+1} \}_{n\in\Z}$. Consider also its non invertible factor on $X^+ := \Sigmah^\N$.

Let $\T^d = \left(\R/\Z \right)^d$ be the torus of dimension $d$, and denote by $m$ the Haar measure on its Borel $\sigma$-algebra.
 
Given a continuous function $\alfa \colon \Sigmah \to \T^d$, the skew-product map
\begin{equation}\label{base map}
\f \colon X \times \T^d \to X  \times \T^d \, , \quad \f (\om, \theta) := \left( \sigma \om, \theta + \alfa ( \om_0) \right)
\end{equation}
will be referred to as a {\em mixed random-quasiperiodic} (base) dynamics. 

This map preserves the measure $\muh^\Z\times m$ and it is the natural extension of the non invertible map on
$X^+ \times \T^d$ which preserves the measure $\muh^\N\times m$ and is defined by the same expression.

We will study the ergodicity of the mixed random-quasiperiodic system $\left( X \times \T^d, \f, \muh^\Z\times m \right)$. For simplicity, when this holds, we sometimes call the measure $\muh$ ergodic, or ergodic with respect to $\f$. 

We first consider a factor of this system, induced by the function $\alfa \colon \Sigmah \to \T^d$. Regard $\Sigma := \T^d$ as a space of symbols equipped with the push-forward measure $\mu := \alfa_\ast \muh$ and consider the skew-product map
\begin{equation}\label{base map 2}
f \colon \Sigma^\Z \times \T^d \to \Sigma^\Z  \times \T^d \, , \quad f (\beta, \theta) := \left( \sigma \beta, \theta + \beta_0 \right) ,
\end{equation}
where here $\sigma $ stands  for the Bernoulli shift on the space $\Sigma^\Z$ of sequences $\beta =  \{ \beta_{n} \}_{n\in\Z}$. The function
$$\pi \colon \Sigmah^\Z \times \T^d \to \Sigma^\Z \times \T^d, \quad 
\pi \left( \{ \om_n \}_n, \theta \right) = \left( \{ \alfa (\om_n) \}_n, \theta \right) $$
semi-conjugates  $\left(\Sigmah^\Z \times \T^d, \f,  \muh^\Z\times m \right)$ to $\left( \Sigma^\Z \times \T^d, f,  \mu^\Z\times m \right)$. Thus the second system is a factor of the first, showing in particular that the ergodicity of $\muh$ implies that of $\mu$. While in general the reverse implication is not true, in our case it does hold. That is because the action in the first coordinate is a Bernoulli shift which is mixing.

\begin{proposition}\label{prop ergodic}
The measure preserving dynamical system $\left(\f,  \muh^\Z\times m \right)$ is ergodic if and only if $\left(f,  \mu^\Z\times m \right)$ is ergodic.
\end{proposition}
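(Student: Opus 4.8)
My plan is to reduce, via a Fourier expansion in the torus variable, to a statement about solutions of cohomological equations over the symbolic base, and then to exploit the Bernoulli (mixing) structure of that base to transfer such solutions between the two models. One direction is free: $\left(\Sigma^\Z\times\T^d, f, \mu^\Z\times m\right)$ is a factor of $\left(\Sigmah^\Z\times\T^d, \f, \muh^\Z\times m\right)$ via $\pi$, as already observed, so ergodicity of $\f$ implies ergodicity of $f$. For the converse I would argue by contraposition: assuming $\f$ is not ergodic, I build a non-constant $f$-invariant element of $L^2(\mu^\Z\times m)$. Pick a non-constant $\f$-invariant $\phi\in L^2(\muh^\Z\times m)$ and expand $\phi(\om,\theta)=\sum_{k\in\Z^d}\phi_k(\om)\,e^{2\pi i\langle k,\theta\rangle}$ with $\phi_k\in L^2(\muh^\Z)$. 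Matching Fourier coefficients in $\phi\circ\f=\phi$ turns the invariance of $\phi$ into the family of cohomological equations $\phi_k(\sigma\om)\,e^{2\pi i\langle k,\alfa(\om_0)\rangle}=\phi_k(\om)$, $k\in\Z^d$. The $k=0$ equation says $\phi_0$ is $\sigma$-invariant, hence constant since the Bernoulli shift $\left(\Sigmah^\Z,\sigma,\muh^\Z\right)$ is ergodic; as $\phi$ is non-constant, some $\phi_k$ with $k\neq0$ is non-zero, and I fix such a $k$.

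The heart of the argument --- and the step I expect to be the main obstacle --- is to show that such a $\phi_k$ is automatically measurable with respect to $\Circ:=\sigma\bigl(\alfa(\om_j):j\in\Z\bigr)$, i.e.\ that it depends on $\om$ only through the symbol sequence $(\alfa(\om_n))_n$. Iterating the cohomological equation gives $\phi_k=S_N\cdot(\phi_k\circ\sigma^N)$, where $S_N:=\exp\!\bigl(2\pi i\langle k,\sum_{j=0}^{N-1}\alfa(\om_j)\rangle\bigr)$ is unimodular and $\Circ$-measurable. I would then fix $M\ge0$, set $\Phi_M:=\EE\bigl[\phi_k\mid\Circ\vee\sigma(\om_j:j\ge -M)\bigr]$, and use that $S_{N+M}$ is $\Circ$-measurable together with the transformation rule for conditional expectations under the invertible measure-preserving shift $\sigma$ to obtain, for every $N\ge -M$,
\[
\EE\bigl[\phi_k\mid \Circ\vee\sigma(\om_j:j\ge N)\bigr]=S_{N+M}\cdot\bigl(\Phi_M\circ\sigma^{N+M}\bigr),
\]
whose $L^2$-norm equals $\|\Phi_M\|_2$, independently of $N$ (since $S_{N+M}$ is unimodular and $\sigma$ is measure-preserving). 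Here the mixing of the symbolic base enters: conditionally on $\Circ$ the coordinates $\om_j$ remain independent, so by the conditional Kolmogorov zero--one law $\bigcap_N\bigl(\Circ\vee\sigma(\om_j:j\ge N)\bigr)=\Circ$ modulo $\muh^\Z$-null sets, and hence the left-hand side above converges in $L^2$, as $N\to\infty$, to $\EE[\phi_k\mid\Circ]$. Comparing norms forces $\|\Phi_M\|_2=\|\EE[\phi_k\mid\Circ]\|_2$; since $\EE[\Phi_M\mid\Circ]=\EE[\phi_k\mid\Circ]$ and conditional expectation is a norm-decreasing projection, this gives $\Phi_M=\EE[\phi_k\mid\Circ]$ for all $M$, and letting $M\to\infty$ (so that $\Circ\vee\sigma(\om_j:j\ge -M)$ increases to the full Borel $\sigma$-algebra of $\Sigmah^\Z$) martingale convergence yields $\phi_k=\EE[\phi_k\mid\Circ]$.

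Granting this, the rest is routine. Writing $\phi_k=\Psi_k\circ p$, where $p(\om):=(\alfa(\om_n))_n$ is the projection onto the symbol sequence, one gets $\Psi_k\in L^2(\mu^\Z)$, $\Psi_k\neq0$ (using $p_\ast\muh^\Z=\mu^\Z$). Pushing the cohomological equation forward along $p$ (using $p\circ\sigma=\sigma\circ p$ and $\alfa(\om_0)=(p\om)_0$) gives $\Psi_k(\sigma\beta)\,e^{2\pi i\langle k,\beta_0\rangle}=\Psi_k(\beta)$ for $\mu^\Z$-a.e.\ $\beta$, so $\Psi(\beta,\theta):=\Psi_k(\beta)\,e^{2\pi i\langle k,\theta\rangle}$ is a non-constant (as $k\neq0$) $f$-invariant element of $L^2(\mu^\Z\times m)$, whence $f$ is not ergodic. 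The genuinely delicate point is thus the $\Circ$-measurability of $\phi_k$, which is exactly where the Bernoulli nature of the symbolic factor --- and nothing weaker --- is used; it is what makes the otherwise-false reverse implication true here. If the martingale argument above proves cumbersome to write cleanly, an equivalent approach to that step is to restrict $\phi_k$ to the $p$-fibers --- on which $\muh^\Z$ disintegrates as a product measure --- and decompose it into its components depending on finitely many coordinates: the cohomological equation shifts the coordinate support of each component by one at each iteration, and a shift-invariance plus summability argument then forces every component depending on a non-empty set of coordinates to vanish, again by ergodicity of the Bernoulli shift on $(\Sigma^\Z,\mu^\Z)$.
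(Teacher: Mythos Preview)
Your argument is correct and genuinely different from the paper's. The paper proves the converse direction directly, via the Ces\`aro--correlation characterization of ergodicity: it takes tensor products $\varphi_1\otimes\varphi_2$ with $\varphi_1$ depending on finitely many coordinates, disintegrates $\muh$ over $\mu$ along $\alfa$ to define a fiberwise averaging operator $\Av$, and checks that for $N$ larger than the cylinder length one has $\Av[(\varphi_1\circ\sigma^N)\psi_1]=(\Av\varphi_1\circ\sigma^N)(\Av\psi_1)$; this reduces the correlation integral for $\f$ to the one for $f$ with observables $(\Av\varphi_1)\otimes\varphi_2$, and ergodicity of $f$ finishes. Your route instead works with invariant functions: a Fourier expansion in $\theta$ converts $\f$-invariance into a family of cohomological equations over the Bernoulli base, and the crux becomes showing that any $L^2$ solution $\phi_k$ is automatically $\Circ$-measurable. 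Your martingale/tail-triviality argument for this step is sound --- the identity $\EE[\phi_k\mid\Circ\vee\sigma(\om_j:j\ge N)]=S_{N+M}\,(\Phi_M\circ\sigma^{N+M})$ follows from shift-invariance of $\Circ$ and the transformation rule, and the intersection $\bigcap_N(\Circ\vee\sigma(\om_j:j\ge N))=\Circ$ holds because, for any cylinder function $g$, independence of the coordinates gives $\EE[g\mid\Circ\vee\sigma(\om_j:j\ge N)]=\EE[g\mid\Circ]$ once $N$ exceeds the cylinder length.

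What each approach buys: the paper's disintegration/averaging viewpoint is shorter and makes the structural relation between the two systems transparent (the operator $\Av$ is exactly the conditional expectation onto your $\Circ$, applied coordinatewise), while your approach isolates precisely where the Bernoulli hypothesis enters --- conditional tail triviality --- and in principle generalizes to any base for which that property holds. Both use the product structure at the same essential moment: the paper to split $\Av$ of a product into a product of $\Av$'s once the supports separate, you to get the conditional zero--one law.
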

\begin{proof}
It is enough to prove the reverse statement.

Recall that a measure preserving dynamical system $(\mathcal{X},f,\la)$ is ergodic if and only if for any $\varphi$, $\psi\in L^{2}(\mathcal{X})$,
\begin{equation}\label{ergo}
\lim_{n\to \infty}\frac{1}{n}\sum_{j=0}^{n-1}\int(\varphi\circ f^j)\psi \, d\la=\int \varphi \, d\la \, \int \psi \, d\la.
\end{equation}

This equivalent definition of ergodicity will allow us to make use of the mixing property of the Bernoulli shift.

Since $(\ref{ergo})$ is linear in $\varphi$ and $\psi$, in order to prove the ergodicity of $(\mathcal{X},f,\la)$ it is enough to find a subset $V\subset L^2 (\mathcal{X})$ such that ${\rm LS} (V)$, the linear span of $V$, is dense in $L^2(\mathcal{X})$ and
$(\ref{ergo})$ holds for any $\varphi, \psi \in V$.

We construct such a set $V \subset L^2 (\Sigmah^\Z \times \T^d)$ as an increasing limit of sets $V_n$ of functions depending on a finite number of variables.

Given  $\varphi \colon \Sigmah^\Z \to \R$ and $\psi \colon \T^d \to\R$, denote by $\varphi \otimes \psi$ the function on 
$ \Sigmah^\Z \times \T^d$ defined by $ \varphi \otimes \psi (\om, \theta) := \varphi (\om) \,  \psi (\theta)$.

Then for all $n\in \N$, let
$$V_n :=\left\{\varphi \otimes \psi \colon \varphi\in C^0_n(\Sigmah^\Z), \psi\in C^0(\T^d) \right\}  ,$$ 
where $C^0_n(\Sigmah^\Z)$ consists of all observables on $\Sigmah^\Z$ which depend only on the coordinates $(\omega_{-n},\cdots,\omega_0,\cdots,\omega_n)$. These observables are simply conditional expectations of  absolutely continuous functions with respect to the sub-algebra generated by the centered cylinder of length $2n+1$. 
The sequence of sets $\{V_n\}_{n\ge1}$ is clearly increasing, so let $V :=\bigcup_{n=0}^{\infty} V_n$.

Recall that the measure $\mu \in \Prob (\Sigma)$ is the push-forward of $\muh\in \Prob (\Sigmah)$ via the map $\alfa:\Sigmah\to \Sigma$. We may then consider the disintegration of $\muh$ into $\left\{\muh_\beta\right\}_{\beta \in \Sigma} \subset \Prob_c (\Sigmah)$ such that  $\muh=\int_\Sigma \muh_\beta d\mu(\beta)$. A direct computation yields that
$$
\muh^\Z=\int_{\Sigma^\Z} ( \prod_{i\in\Z}\muh_{\beta_i} )  \, d\mu^\Z(\{\beta_i\}_{i\in \Z}).
$$

We define $\Av \colon C^0_n(\Sigmah^\Z) \to C^0_n(\Sigma^\Z)$ by
$$
(\Av \,\varphi)(\{\beta_i\}_{i\in\Z}) := \int_{\Sigmah^\Z}  \varphi(\{\om_i\}_{i\in\Z}) \, d (\prod_{i\in\Z}\muh_{\beta_i})(\{\om_i\}_{i\in\Z}).
$$

Note that 
$$\int \Av \,\varphi \, d \mu^\Z = \int \varphi \, d \muh^\Z .$$

It is straightforward to check that for $N>2n+1$ and $\varphi,\phi \in C^0_n(\Sigmah^\Z)$ we have $\Av[(\varphi\circ \sigma^N)\phi]=\Av(\varphi\circ \sigma^N)\Av\,\phi$ and $\Av(\varphi \circ \sigma)=(\Av\,\varphi)\circ \sigma$ (we use the same symbol $\sigma$ to denote both the shift on $\Sigmah^\Z$ and on $\Sigma^\Z$).

Take $\varphi_1\in C^0_n(\Sigmah^\Z), \varphi_2\in C^0(\T^d)$ and $\varphi=\varphi_1 \otimes \varphi_2$. Similarly, take $\psi_1\in C^0_n(\Sigmah^\Z), \psi_2\in C^0(\T^d)$ and $\psi=\psi_1 \otimes \psi_2$.

For $N>2n+1$, we have
\begin{align*}
&\int \int (\varphi\circ \f^N) \, \psi \,d\muh^\Z\times m\\
=& \int \int \varphi_1(\sigma^N\omega)\varphi_2(\theta+\alfa(\omega_0) + \cdots + \alfa(\omega_{n-1})) \, \psi_1(\omega)\psi_2(\theta)d\muh^\Z(\omega) d m(\theta)\\
=& \int \int \varphi_1(\sigma^N\omega)\psi_1(\omega) \, \varphi_2(\theta+\alfa(\omega_0) + \cdots + \alfa(\omega_{n-1}))\psi_2(\theta)d\muh^\Z(\omega) d m(\theta)\\
=& \int \int \Av[(\varphi_1\circ \sigma^N)\psi_1] \, \varphi_2(\theta+\beta_0+\cdots+\beta_{n-1})\psi_2(\theta)d\mu^{\Z}         (\{\beta_i\}_{i\in\Z})dm(\theta)\\
=& \int \int [(\Av\,\varphi_1)\circ \sigma^N](\Av \,\psi_1) \, \varphi_2(\theta+\beta_0+\cdots+\beta_{n-1})\psi_2(\theta)d\mu^{\Z} dm(\theta)\\
=& \int \int [(\Av\, \varphi_1) \otimes \varphi_2]\circ f^N \cdot [(\Av \,\psi_1) \otimes \psi_2] \, d\mu^\Z\times m,
\end{align*}
which converges in the Ces\`{a}ro sense to
\begin{equation}\label{cesaro}
\int (\Av\,\varphi_1)\otimes \varphi_2 \,d\mu^\Z\times m \, \cdot \,  \int(\Av\,\psi_1)\otimes \psi_2 \,d\mu^\Z\times m
\end{equation}
since $\left(f,  \mu^\Z\times m \right)$ is ergodic. 

Moreover, we have (same computations for $\psi$)
\begin{align*}
&\int (\Av\,\varphi_1)\otimes \varphi_2 \,d\mu^\Z\times m \\
=&\int \Av\,\varphi_1 \,d\mu^\Z \cdot \int \varphi_2 \,dm \\
=& \int \varphi_1 \,d\muh^\Z \cdot \int \varphi_2 \,dm\\
=& \int \varphi_1\otimes \varphi_2 \,d\muh^\Z\times m\\
=& \int \varphi \,d\muh^\Z\times m.
\end{align*}
Therefore $\eqref{cesaro}$ is equal to $\displaystyle \int \varphi \,d\muh^\Z\times m \cdot \int \psi\, d\muh^\Z\times m$ and we conclude that 
$$
\int (\varphi\circ f^N)\psi \,d\muh^\Z\times m  \xrightarrow[\text{$N \rightarrow \infty$}]{\text{Ces\`{a}ro}}\int \varphi \,d\muh^\Z\times m \cdot \int \psi\, d\muh^\Z\times m
$$
holds for any $\varphi,\psi\in V_n$ and $n\in \N$ thus  for any $\varphi,\psi\in V$.

This proves that $\left(\f,  \muh^\Z\times m \right)$ is also ergodic.
\end{proof}

Let us consider two basic examples of mixed random-quasiperiodic transformations as in~\eqref{base map}.

\begin{example}
Given a standard Borel probability measure space $(\Sigmah, \muh)$ and a frequency $\alpha \in \T^d$, let $\alfa \colon \Sigmah \to \T^d$ be the constant function $\alfa (\om_0) \equiv \alpha$. Then the corresponding skew-product map $\f$ on $\Sigmah^\Z \times \T^d$ is given by 
$$\f (\om, \theta) = (\sigma \om, \theta + \alpha) \, .$$

Thus the system $(\f, \muh^\Z \times m)$ is just the product between the Bernoulli shift $\sigma$ and the torus translation by $\alpha$, which we denote by $\tau_\alpha$. 

Moreover, since $\mu = \alfa_\ast \muh = \delta_\alpha$ (the Dirac measure centered at $\alpha$), 
its factor $(f, \mu^\Z \times m)$ as defined above is clearly isomorphic to the torus translation $(\tau_\alpha, m)$. By  Proposition~\ref{prop ergodic}, $(\f, \muh^\Z \times m)$ is ergodic if and only if $(\tau_\alpha, m)$ is ergodic, which is of course well known.
\end{example}

\begin{example}
Given a standard Borel probability measure space $(S, \rho)$ and $\mu \in \Prob (\T^d)$, let $\Sigmah := \T^d \times S$, $\muh := \mu \times \rho$ and let $\alfa \colon \Sigmah \to \T^d$ be the projection in the first coordinate, $\alfa (\beta, b) = \beta$. It clearly holds that $\alfa_\ast \muh = \alfa_\ast (\mu \times \rho) = \mu$.

The corresponding skew-product map on  $\Sigmah^\Z \times \T^d$ is given by
$$\f (\{\om_n\}, \theta) = (\{\om_{n+1}\}, \theta + \alfa (\om_0)) \, ,$$
while its factor on $\Sigma^\Z \times \T^d$ is
$$f (\{\beta_n\}, \theta) = (\{\beta_{n+1}\}, \theta + \beta_0 ) \, .$$

By  Proposition~\ref{prop ergodic}, $\muh$ is ergodic with respect to $\f$ if and only if $\mu$ is ergodic with respect to $f$. \end{example}

\subsection{Stochastic dynamical systems} We introduce some general concepts that will be used throughout the paper.

Given a metric space $(M, d)$, denote by $C^0 (M)$, $C_b (M)$, $\Lip (M)$, respectively, the spaces of continuous functions, continuous and bounded functions and Lipschitz continuous real valued functions on $M$. Let $\norm{g}_0$ denote the uniform norm of a function $g \in C_b (M)$ and let $\norm{g}_\Lip$ denote the best 
 Lipschitz constant of a function $g \in \Lip (M)$. 
 
The following Urysohn type lemma will be needed in the sequel.

\begin{lemma} \label{Urysohn}
Let $(M, d)$ be a metric space and let $\nu$ be a Borel probability measure in $M$. Given a closed set $L \subset M$ and $\ep > 0$ there are an open set $D \supset L$ such that $\nu (D) < \nu (L) + \ep$ and a Lipschitz continuous function $g \colon M \to [0, 1]$ such that $\ind_L \le g \le \ind_D$. 
\end{lemma}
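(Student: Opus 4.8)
The plan is to prove both conclusions — the existence of the open set $D$ and the existence of the Lipschitz function $g$ — in tandem, since the natural candidate for $g$ lives on a slightly shrunk neighborhood of $L$. First I would use outer regularity of the Borel probability measure $\nu$ on the metric space $M$ (every finite Borel measure on a metric space is outer regular on Borel sets, in particular on the closed set $L$): there is an open set $U \supset L$ with $\nu(U) < \nu(L) + \ep$. I then want a Lipschitz function that is $1$ on $L$ and supported in $U$, so I would work with the distance-to-a-set functions, which are the standard source of Lipschitz functions on a metric space.

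Next I would set $r := \dist(L, M \setminus U)$ if $U \ne M$, and handle the trivial case $U = M$ separately (there one may just take $D = M$ and $g \equiv 1$, or more carefully any constant). If $L = \emptyset$ the statement is trivial as well, so assume $L \neq \emptyset$ and $U \neq M$. The subtlety is that $r$ could a priori be $0$ even though $L$ is closed and disjoint from the closed set $M \setminus U$, because in a general (non-compact, non-proper) metric space two disjoint closed sets can be at distance zero. To circumvent this I would not insist on a uniform positive gap; instead I would define
\begin{equation*}
g(x) := \max\left\{\, 0,\ 1 - \tfrac{1}{\ep'}\,\dist(x, L) \,\right\}
\end{equation*}
for a suitable small $\ep' > 0$, and let $D := \{x \in M : \dist(x, L) < \ep'\} = \{x : g(x) > 0\}$, which is open and contains $L$. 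Then $g$ is Lipschitz (composition of the $1$-Lipschitz map $x \mapsto \dist(x,L)$ with a Lipschitz function of one real variable), takes values in $[0,1]$, equals $1$ on $L$ since $\dist(\cdot, L) = 0$ there, and vanishes outside $D$, so $\ind_L \le g \le \ind_D$ holds automatically. It remains only to choose $\ep'$ so that $\nu(D) < \nu(L) + \ep$.

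For that last step I would invoke continuity of the measure along the decreasing family of open sets $D_t := \{x : \dist(x,L) < t\}$ as $t \downarrow 0$: since $L$ is closed, $\bigcap_{t > 0} D_t = \overline{L} = L$, hence $\nu(D_t) \downarrow \nu(L)$ by continuity from above of the finite measure $\nu$ (using $\nu(D_1) \le 1 < \infty$). Therefore there is $\ep' > 0$ with $\nu(D_{\ep'}) < \nu(L) + \ep$, and we take $D := D_{\ep'}$ and $g$ as above. I expect the main (really the only) obstacle to be precisely the point flagged above: resisting the temptation to build $g$ out of the ratio $\dist(x, L)/\dist(x, M\setminus U)$, which would force one to prove a positive separation that need not hold in a general metric space; the fix is to give up on matching $U$ exactly and instead shrink to a tubular neighborhood $D_{\ep'}$ of $L$ of small measure, which is all the statement requires.
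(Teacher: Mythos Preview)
Your proof is correct and follows essentially the same approach as the paper: both take $D$ to be the open $\delta$-neighborhood $\{x : \dist(x,L) < \delta\}$ of $L$, use that $\bigcap_{\delta>0} D_\delta = L$ (since $L$ is closed) together with continuity of the finite measure from above to make $\nu(D)$ close to $\nu(L)$, and then build a Lipschitz interpolant. The only difference is cosmetic: the paper uses the Urysohn-type ratio $g(x) = \dfrac{d(x,D^{\complement})}{d(x,D^{\complement}) + d(x,L)}$ (noting that by construction $d(L,D^{\complement}) \ge \delta > 0$, so the denominator is bounded below), whereas you use the simpler truncation $g(x) = \max\{0, 1 - \tfrac{1}{\ep'}\,d(x,L)\}$; both work and yield Lipschitz constant $\asymp 1/\delta$.
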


\begin{proof}
For every $\delta > 0$ let $L_\delta := \left\{ x \in M \colon d (x, L) < \delta \right\}$ be the open $\delta$-neighborhood of $L$. Since $L$ is closed we have that $\bigcap_{\delta > 0} \, L_\delta = L$. Then $\nu (L_\delta) \to \nu (L)$ as $\delta \to 0$, so there is $\delta_0 = \delta_0 (L, \ep, \nu) > 0$ such that $\nu (L_{\delta_0}) < \nu (L) + \ep$.  

Let $D :=  L_{\delta_0}$ and note that $d \left( L, D\comp \right) = d \left( L, L_{\delta_0}\comp \right) \ge \delta_0 > 0$. One can then easily verify that the function $g \colon M \to \R$, 
$$g (x) :=  \frac{d (x, D\comp)}{d (x, D\comp) + d (x, L)} $$
is Lipschitz continuous with $\norm{g}_\Lip \le \frac{1}{\delta_0}$, while clearly $\ind_L \le g \le \ind_D$.

The main point here is that the closed set $L$ need not be compact, as its distance to the closed set $D\comp$ is already bounded away from zero.
\end{proof}

Let $\Prob (M)$ denote the space of Borel probability measures on $M$ and define the weak* convergence of a sequence $\nu_n \to \nu$ by $\int \phi d \nu_n \to \int \phi d \nu$ for all $\phi \in C_b (M)$. 

Furthermore, let
$$\Prob_1 (M) := \{ \nu \in \Prob (M) \colon \int_M d (x, x_0) \, d \nu (x) < \infty \} \, ,$$
where $x_0 \in M$ is an arbitrary point (whose choice is of course inconsequential). 
Note that $\Prob_c (M)$, the space of compactly supported Borel probability measures on $M$, is contained in $\Prob_1 (M)$.

 If $M$ is a compact metric space then the weak* convergence defines the weak topology on $\Prob_c (M) = \Prob_1 (M) = \Prob (M)$ and this topology is  compact and metrizable. If $M$ is a (more general) Polish metric space (thus not necessarily compact), the weak topology on $\Prob_1 (M)$ is defined by the weak* convergence $\nu_n \to \nu$ together with the convergence $\int d (x, x_0) \,  d \nu_n \to \int d (x, x_0) \,  d \nu$ for some (and hence all) $x_0 \in M$. Then $\Prob_1 (M)$ is itself a Polish space.

In either case, consider the Wasserstein (or Kantorovich-Rubinstein) distance $W_1$ in the space $\Prob_1 (M)$, where
$$W_1 (\nu, \nu') := \sup \left\{  \int g \, d (\nu-\nu')  \colon g \in \Lip (M), \norm{g}_\Lip \le 1 \right\} \, .$$

It is well known that this distance metrizes the weak topology on $\Prob_1 (M)$, see~\cite[Chapter I.6]{Villani} for this and all other related concepts mentioned above.

\medskip

A {\em stochastic dynamical system} (SDS) on $M$ (also called a random walk in~\cite{Furstenberg}) is any continuous map  $K \colon M\to\Prob (M)$, $x\mapsto K_x$.
An SDS $K$ on $M$ induces a bounded linear operator (called the Markov operator)
$\Qop_K \colon C_b (M)\to C_b (M)$ defined by
$$ (\Qop_K \varphi)(x):= \int_M \varphi(y)\, dK_x(y) .$$
It also induces the adjoint operator
$\Qop_K^\ast :\Prob (M)\to \Prob (M)$ of $\Qop_K$ characterized by
$$ \Qop_K^\ast \nu = K\ast \nu  := \int_M  K_x\, d\nu(x) . $$
A measure $\nu\in \Prob(M)$ is called \textit{$K$-stationary} if
$\Qop_K^\ast \nu=\nu$. We denote by $\Prob_K(M)$ the convex and compact subspace of
all $K$-stationary probability measures on $M$.

Let $(G, \cdot)$ be a topological group acting on $M$ from the left. Denote by $\tau_g \colon M \to M$ the action on $M$ by $g\in G$, that is, $\tau_g (x) = g x$.

Given $\mu \in \Prob_c (G)$ and $\nu \in \Prob_c (M)$, the convolution $\mu \ast \nu \in \Prob_c (M)$ is given by
$$\mu \ast \nu (E) := \int_M \int_G \ind_E (g x) \, d \mu (g) d \nu (x) $$
for any Borel set $E \subset M$.

Then
$$\mu \ast \nu = \int_G \left( \tau_g \right)_\ast \nu \, d \mu (g) \, ,$$
where $\left( \tau_g \right)_\ast \nu $ is the push-forward probability measure
$$\left( \tau_g \right)_\ast \nu (E) := \nu \left(\tau_g^{-1} E \right) = \nu \left (g^{-1} E \right) \, .$$

A probability measure $\mu \in \Prob_c (G)$ determines an SDS on $M$ by
$$M \ni x \mapsto \mu \ast \delta_x = \int_G \delta_{g x } \, d \mu (g) \in \Prob _c (M) \, .$$

The associated Markov operator $\Qop_\mu \colon C_b (M) \to C_b (M)$ is given by
$$\left( \Qop_\mu \phi \right) (x) = \int_M \phi (y) \, d \mu \ast \delta_x (y) = \int_G \phi (g x) \, d \mu (g) \, .$$

Moreover, its dual operator $\Qop_\mu^\ast \colon \Prob_c (M) \to \Prob_c (M)$ is
$$\Qop_\mu^\ast \nu = \int_M \mu \ast \delta_x \, d \nu (x) = \mu \ast \nu \, .$$

Let
$$\Prob_\mu (M) := \left\{ \nu \in \Prob_c (M) \colon  \mu \ast \nu = \nu \right\} $$
be the set of $\mu$-stationary measures on $M$, that is, the fixed points of the dual Markov operator $\Qop_\mu^\ast$.

Given such a $\mu$-stationary measure $\nu$, any observable $\phi \colon M \to \R$ for which
$$\left( \Qop_\mu \phi \right) (x) = \phi (x) \quad \text{ for } \nu \text{ a.e. } x \in M$$
is called a $\nu$-stationary observable.

\medskip

Specializing to $G = M = \T^d$ seen as an additive group, for $\alpha, \theta \in \T^d$ and $\mu \in \Prob (\T^d)$ we have
$\tau_\alpha (\theta) = \theta + \alpha$,
$$\left( \Qop_\mu \phi \right) (\theta) = \int_{\T^d} \phi (\theta + \alpha) \, d \mu (\alpha) $$
and
$$ \Qop_\mu^\ast \nu (E) =    \int_{\T^d} \left( \tau_\alpha \right)_\ast \nu (E) \, d \mu (\alpha) = 
\int_{\T^d} \nu \left( \tau_\alpha^{-1} E \right)  \, d \mu (\alpha) $$
for any Borel measurable set $E \subset \T^d$.

Let $m$ be the Haar measure on $\T^d$. Note that $m$ is $\mu$-stationary (since it is translation invariant). 

Finally, given any $k\in\Z^d$, we define the corresponding  Fourier coefficient of the measure
$\mu$ by
$$ \hat\mu(k) := \int_{\Sigma} e^{2\pi i \langle k, \alpha\rangle}\, d\mu(\alpha) .$$

\subsection{Ergodicity of the base dynamics}
Proposition~\ref{prop ergodic} reduces the study of the ergodicity of a skew product map like~\eqref{base map} to that of its factor~\eqref{base map 2}. We will then study the latter.

The following result provides various characterizations of the ergodicity of the base transformation. Some of them, e.g. (4) and (5) can also be deduced from Anzai's theorem (see~\cite[Theorem 4.8]{Petersen}) on the ergodicity of general skew products.

\begin{theorem}
\label{ergodic charact}
Let $\mu \in \Prob (\Sigma)$ where $\Sigma = \T^d$, and consider the skew product map on $\Sigma^\Z \times \T^d$ given by $f ( \{\beta_i\}, \theta) = ( \sigma \{\beta_i\}, \theta + \beta_0 )$. 
The following statements are equivalent:
\begin{enumerate}
	\item[(1)] $f$ is ergodic w.r.t. $\mu^\Z\times m$;
	\item[(2)] $f$ is ergodic w.r.t. $\mu^\N\times m$;
	\item[(3)] Every $m$-stationary observable $\varphi\in L^\infty(\T^d)$ is constant   $m$-a.e.;
	\item[(4)] $\hat \mu(k)\neq 1$  for every $k\in\Z^d\setminus\{0\}$;
	\item[(5)] For every $k\in\Z^d\setminus\{0\}$ there exists $\alpha\in \Supp$ such that $\langle k, \alpha\rangle\notin \Z$;
	\item[(6)] $ \T^d=\overline{\cup_{n\geq 1} \Supp^n}$ where $\Supp = \supp (\mu)$ and
	$\Supp^n:=\Supp+\Supp^{n-1}$ \, $\forall n\geq 2$;
	\item[(7)]  $m$ is the unique $\mu$-stationary measure in $\Prob(\T^d)$,
	\item[(8)]  $\lim_{n\to +\infty}
	\frac{1}{n}\,\sum_{j=0}^{n-1}(\Qop_\mu^j \varphi)(\theta)=\int_{\T^d} \varphi\, dm$, \, $\forall\, \theta\in\T^d$ \, $\forall \varphi\in \Cscr(\T^d)$.
\end{enumerate}
\end{theorem}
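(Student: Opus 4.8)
The plan is to prove the equivalences by establishing a cycle of implications together with a few two-sided arguments, organized so that each step uses only elementary Fourier analysis on $\T^d$ plus the characterization of ergodicity in terms of Cesàro averages of correlations already recalled in the proof of Proposition~\ref{prop ergodic}. I would first dispatch $(1)\Leftrightarrow(2)$: the invertible system $(f,\mu^\Z\times m)$ is the natural extension of the one-sided system $(f,\mu^\N\times m)$, and ergodicity is preserved under taking natural extensions in both directions — an invariant set for one pulls back/pushes forward to an invariant set for the other. Next I would prove $(1)\Leftrightarrow(3)$. A function $\Phi\in L^2(\Sigma^\Z\times\T^d)$ invariant under $f$ can be expanded in Fourier modes in the $\theta$ variable, $\Phi(\beta,\theta)=\sum_{k\in\Z^d}\varphi_k(\beta)e^{2\pi i\langle k,\theta\rangle}$; invariance under $f$ forces $\varphi_k(\sigma\beta)e^{2\pi i\langle k,\beta_0\rangle}=\varphi_k(\beta)$, so $|\varphi_k|$ is $\sigma$-invariant, hence a.e.\ constant by ergodicity of the Bernoulli shift, and a standard argument (integrate out all but finitely many coordinates, use that the cocycle $e^{2\pi i\langle k,\beta_0\rangle}$ depends on one coordinate) shows that for $k\ne 0$ this constant must be $0$ unless one can build a nontrivial stationary observable on $\T^d$; conversely an $m$-stationary $\varphi$ on $\T^d$ lifts to an $f$-invariant function $(\beta,\theta)\mapsto$ the appropriate transfer-type series. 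The cleanest route is: $\Phi$ is $f$-invariant in $L^2$ iff its zeroth $\theta$-Fourier coefficient is constant and the higher ones vanish, and the latter two conditions are equivalent to $(3)$ for the $\theta$-marginal behavior of the Markov operator $\Qop_\mu$.

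Then I would close the loop through the Fourier-analytic conditions. For $(3)\Leftrightarrow(4)$: an $m$-stationary $\varphi\in L^\infty(\T^d)\subset L^2(\T^d)$ with Fourier coefficients $c_k$ satisfies $(\Qop_\mu\varphi)^\wedge(k)=\hat\mu(-k)\,c_k=c_k$ for all $k$, so nonconstancy is equivalent to the existence of some $k\ne 0$ with $c_k\ne 0$ and $\hat\mu(-k)=1$; since $\hat\mu(-k)=\overline{\hat\mu(k)}$ and $|\hat\mu(k)|\le 1$ with equality in the form $\hat\mu(k)=1$ only when $\langle k,\alpha\rangle\in\Z$ for $\mu$-a.e.\ $\alpha$ (equality in the triangle inequality for the integral of unit complex numbers), this is exactly $(4)$, and the same remark gives $(4)\Leftrightarrow(5)$ once one notes that $\langle k,\alpha\rangle\in\Z$ for $\mu$-a.e.\ $\alpha$ iff it holds for all $\alpha\in\Supp=\supp\mu$ by continuity. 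The equivalence $(5)\Leftrightarrow(6)$ is a Pontryagin-duality statement: the closed subgroup $H:=\overline{\bigcup_{n\ge1}\Supp^n}$ (one checks it is a group because $0$ is in its closure when $\mu$ is stationary — more precisely $\Supp^n\Supp^m\subset\Supp^{n+m}$ and one adds the symmetric part, or simply works with the closed subgroup generated by $\Supp-\Supp$) equals $\T^d$ iff its annihilator is trivial, i.e.\ iff there is no $k\ne0$ with $\langle k,\alpha\rangle\in\Z$ for all $\alpha\in\Supp$, which is $(5)$.

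Finally, $(1)\Rightarrow(8)$ and $(8)\Rightarrow(7)$ and $(7)\Rightarrow(3)$ would close everything. For $(1)\Rightarrow(8)$: ergodicity of $f$ gives, via the correlation formula $(\ref{ergo})$ applied to $\varphi\otimes 1$ and $1\otimes\psi$ with $\psi$ chosen to localize near a fixed $\theta$ — or more directly via unique-ergodicity-type arguments for the skew product — the pointwise (not just a.e.) Cesàro convergence $\frac1n\sum_{j<n}(\Qop_\mu^j\varphi)(\theta)\to\int\varphi\,dm$; here the key observation is that $(\Qop_\mu^j\varphi)(\theta)=\int_{\Sigma^\Z}\varphi(\theta+\beta_0+\dots+\beta_{j-1})\,d\mu^\Z$ is a continuous function of $\theta$ and the convergence can be upgraded to uniform-in-$\theta$ using equicontinuity, exactly the "uniform behavior in the quasiperiodic variable" theme mentioned in the introduction. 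For $(8)\Rightarrow(7)$: if $\nu$ is $\mu$-stationary then $\int\varphi\,d\nu=\int\Qop_\mu^j\varphi\,d\nu$ for all $j$, so averaging and passing to the limit with $(8)$ and dominated convergence gives $\int\varphi\,d\nu=\int\varphi\,dm$ for all $\varphi\in\Cscr(\T^d)$, hence $\nu=m$. For $(7)\Rightarrow(3)$: if $\varphi\in L^\infty$ is $m$-stationary and nonconstant, then a level set of $\varphi$ has $m$-measure strictly between $0$ and $1$; using Lemma~\ref{Urysohn} to approximate its indicator one produces a $\mu$-stationary measure absolutely continuous w.r.t.\ $m$ but not equal to $m$ — alternatively, and more cleanly, the nonconstant bounded stationary $\varphi$ yields via its positive and negative parts two distinct stationary measures $\varphi^\pm\,dm$ (suitably normalized), contradicting uniqueness.

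The main obstacle I anticipate is the passage $(1)\Rightarrow(8)$: deducing a genuinely pointwise-in-$\theta$ (indeed uniform-in-$\theta$) Cesàro limit from measure-theoretic ergodicity of the skew product is not automatic, since ergodic averages a priori converge only almost everywhere. The resolution is to exploit the product structure $\Qop_\mu^j\varphi(\theta)=\int\varphi(\theta+S_j(\beta))\,d\mu^\Z(\beta)$ together with equicontinuity of $\{\Qop_\mu^j\varphi\}_j$ (uniform continuity of $\varphi$ makes this family uniformly equicontinuous), so that a.e.\ convergence of the Cesàro averages plus Arzelà–Ascoli forces uniform convergence; one then checks the limit is the constant $\int\varphi\,dm$ by integrating against $m$. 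The Fourier-analytic equivalences $(3)$–$(6)$ are routine once the equality-in-the-triangle-inequality observation for $\hat\mu(k)=1$ is stated carefully.
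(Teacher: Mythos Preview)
Your overall architecture is close to the paper's, but there is a genuine gap in $(7)\Rightarrow(3)$. The claim that the positive and negative parts $\varphi^\pm$ of a bounded $\Qop_\mu$-stationary $\varphi$ yield stationary measures $\varphi^\pm\,dm$ fails on two counts: $\Qop_\mu\varphi=\varphi$ does not force $\Qop_\mu\varphi^\pm=\varphi^\pm$, and more fundamentally a density $\psi\ge 0$ gives a $\mu$-stationary measure $\psi\,m$ precisely when $\psi$ is fixed by the $L^2(m)$-\emph{adjoint} $\Qop_{\check\mu}$ (convolution by the reflected measure $\check\mu$), not by $\Qop_\mu$ itself. On $\T^d$ this is salvageable because convolution operators on an abelian group are normal, so $\ker(\Qop_\mu-I)=\ker(\Qop_{\check\mu}-I)$ and $(\varphi+\|\varphi\|_\infty)\,m$, normalized, \emph{is} a stationary measure $\neq m$; but this has to be said, and as written the step does not go through. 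The paper avoids the issue entirely by proving $(7)\Rightarrow(6)$ instead: a short lemma shows that any weak-$\ast$ sublimit of $\frac{1}{n}\sum_{j<n}\mu^{\ast j}$ is $\mu$-stationary and supported in $H=\overline{\bigcup_n S^n}$, so $H\neq\T^d$ produces a stationary measure other than $m$. A smaller gap sits in $(5)\Leftrightarrow(6)$: you need $H$ to be a \emph{group}, not merely a closed semigroup, before invoking Pontryagin duality, and your parenthetical suggestions (``$0$ in the closure'', ``work with $S-S$'') do not establish this; the paper uses Poincar\'e recurrence here (equivalently, the standard fact that a closed subsemigroup of a compact group is a group).

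Where your route differs from the paper and works well: your $(8)\Rightarrow(7)$, integrating $(8)$ against an arbitrary stationary $\nu$ and using dominated convergence, is more direct than the paper's contradiction via an extremal stationary measure; your $(1)\Rightarrow(8)$ via uniform equicontinuity of $\{\Qop_\mu^j\varphi\}_j$ and Arzel\`a--Ascoli is a legitimate alternative to the paper's translation trick $\delta_{\theta'-\theta}\ast(\cdot)$ for upgrading a.e.\ to everywhere convergence in $\theta$; and for $(2)\Leftrightarrow(3)$ the paper simply cites a result in Viana's book, whereas your Fourier-in-$\theta$ expansion of an $f$-invariant function is the right idea but would need more care than the sketch provides.
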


\begin{proof}
$(1) \, \Rightarrow\, (2)$ holds trivially because $f$ in (2) is a factor $f$ in (1), i.e., because of the commutativity of the following diagram of measure preserving transformations.
$$
\begin{CD}
X    @>f>>  X\\
@V\pi VV        @VV\pi V\\
\Xplus     @>f>>  \Xplus
\end{CD}
$$

Conversely, $(2) \, \Rightarrow\, (1)$ holds
by Lemma 5.3.1 in~\cite{Hochman}.

The equivalence
$(2) \, \Leftrightarrow\, (3)$ follows from
Proposition 5.13 in~\cite{Viana-book}.

\bigskip

Given a bounded measurable function
$\varphi:\T^d\to \C$, we have  $\varphi\in L^2(\T^d, m)$. Consider its Fourier series
$$ \varphi =\sum_{k\in\Z^d} \hat\varphi(k) \, e_k  \quad \text{ with } \;
e_k(\theta):= e^{2\pi i \langle k, \theta\rangle}. $$
A simple calculation shows that
$$ \Qop_\mu \varphi = \sum_{k\in\Z^d}
\hat \mu(k)\, \hat\varphi(k) \, e_k .$$

$(3) \, \Rightarrow\, (4)$: If $\hat \mu(k)=1$
for some $k\in \Z^d\setminus\{0\}$, then $e_k$ is a non constant $m$-stationary observable.
In other words, if (4) fails then so does (3).

$(4) \, \Rightarrow\, (3)$:
Given $\varphi$  \, $m$-stationary, comparing the two Fourier developments above, for all $k\in\Z^d$ \;
$ \hat \mu(k)\, \hat\varphi(k) = \hat\varphi(k)$  \, $\Leftrightarrow$\,
$\hat\varphi(k)\, (\hat \mu(k)-1) = 0$.
By (4) we then get $\hat\varphi(k)=0$ for all $k\in\Z^d\setminus\{0\}$,
which implies that $\varphi=\hat \varphi(0)$ is $m$-a.e. constant.
This proves (3).

\bigskip

Since $\hat \mu(k)$ is an average of a continuous function with values
on the unit circle, we have
$$ \hat \mu(k)=1 \quad \Leftrightarrow \quad
e^{2\pi i \langle k, \alpha\rangle}= 1,\;
\forall \alpha \in \Supp \quad \Leftrightarrow \quad
\langle k, \alpha\rangle\in\Z,\; \forall \alpha \in \Supp .$$
This proves that  $(4) \, \Leftrightarrow\, (5)$.

\bigskip

$(5) \, \Rightarrow\, (6)$:
Let  $H=\overline{\cup_{n\geq 1} \Supp^n}$ and assume that
$H\neq \T^d$. By definition $H$ is a subsemigroup of $\T^d$.
By Poincar\'e recurrence theorem, $H$ is also a group.
By Pontryagin's duality for locally compact abelian groups,
there exists a non trivial character $e_k:\T^d\to\C$ which contains $H$ in its kernel. In particular this implies that
there exists $k\in\Z^d\setminus\{0\}$ such that
$\langle k, \beta \rangle\in\Z$ for all $\beta\in \Supp$.
This argument shows that  if (6) fails then so does (5).

$(6) \, \Rightarrow\, (5)$:
Assume that (5) does not hold, i.e., for some $k\in\Z^d\setminus\{0\}$
we have $\langle k, \alpha \rangle\in\Z$ for all $\alpha\in \Supp$. Then $e_k$ is a non trivial character
of $\T^d$ and $H:=\{ \theta\in\T^d\colon e_k(\theta)=1 \}$
is a proper sub-torus, i.e. a compact subgroup of $\T^d$.
The assumption implies that $\Supp\subset H$, and since $H$ is a group, $S^n\subset H$, $\forall n\geq 1$. This proves that
(6) fails.

\bigskip

Since the adjoint operator
$\Qop_\mu^\ast:\Prob(\T^d)\to \Prob(\T^d)$ satisfies
$\Qop_\mu^\ast \pi=\mu\ast \pi$, denoting by
$\mu^{\ast j}:= \mu \ast  \cdots \ast \mu$ the $j$-th convolution power of $\mu$,
we have $(\Qop_\mu^\ast)^n \delta_0= \mu^{\ast n}$\, $\forall n\in\N$.

\begin{lemma}\label{lem1.2}
Any sublimit of the sequence
$\pi_n:= \frac{1}{n}\,\sum_{j=0}^{n-1} \mu^{\ast j} $ is a $\mu$-stationary measure.
\end{lemma}

\begin{proof}
Given $\varphi\in\Cscr(\T^d)$,
\begin{align*}
\langle \Qop_\mu \varphi-\varphi, \pi_n \rangle &=
\frac{1}{n}\, \sum_{j=0}^{n-1} \langle \Qop_\mu \varphi-\varphi, (\Qop_\mu^\ast)^j\delta_0  \rangle\\
&=
\frac{1}{n}\, \sum_{j=0}^{n-1}  (\Qop_\mu^{j+1} \varphi)(0)-(\Qop_\mu^j\varphi)(0)\\
&= \frac{1}{n} \, ((\Qop_\mu^n \varphi)(0)-\varphi(0)) = \bigo (\frac{1}{n}) .
\end{align*}
Hence, if $\pi\in \Prob(\T^d)$ is a sublimit of $\pi_n$, taking the limit along the corresponding subsequence
of integers we have
$$
\langle \varphi, \Qop_\mu^\ast \pi-\pi  \rangle =  \langle \Qop_\mu \varphi-\varphi, \pi \rangle=  0 ,
$$
which implies that $\Qop_\mu^\ast\pi=\pi$.
\end{proof}

$(2) \, \Rightarrow\, (8)$:
By ergodicity of $f$ w.r.t. $\mu^\N\times m$
and Birkhoff Ergodic Theorem, given $\varphi\in\Cscr(\T^d)$ there exists a full measure set of $(\omega,\theta)\in \Supp^\N\times\T^d$
with
$$ \lim_{n\to +\infty} \frac{1}{n}\, \sum_{j=0}^{n-1} \varphi(\theta+\tau^j(\omega)) =\int \varphi\, dm ,$$
where $\tau^j(\omega)=\omega_0+\cdots+\omega_{j-1}$ and $\omega=\{\omega_j\}_{j\in \N}$. Hence there exists a Borel set $\Bscr\subset \T^d$
with $m(\Bscr)=1$ such that, applying the Dominated Convergence Theorem, we have for all $\theta\in\Bscr$,
$$ \lim_{n\to +\infty} \frac{1}{n}\, \sum_{j=0}^{n-1} (\Qop_\mu^j \varphi)(\theta) =\int \varphi\, dm .$$
The set $\Bscr$ depends on the continuous function $\varphi$, but since the space $\Cscr(\T^d)$ is separable
we can choose this Borel set $\Bscr$ so that the previous limit holds for every $\theta\in\Bscr$ and $\varphi\in\Cscr(\T^d)$. This implies the following weak* convergence in $\Prob(\T^d)$:
$$  \lim_{n\to +\infty} \frac{1}{n}\, \sum_{j=0}^{n-1} (\Qop_\mu^\ast)^j \delta_\theta = m  . $$
Given any $\theta'\notin\Bscr$ take $\theta\in \Bscr$. Convolving both sides on the right by $\delta_{\theta'-\theta}$ we get
$$  \lim_{n\to +\infty} \frac{1}{n}\, \sum_{j=0}^{n-1} (\Qop_\mu^\ast)^j \delta_{\theta'}
= \lim_{n\to +\infty} \frac{1}{n}\, \sum_{j=0}^{n-1} \mu^{\ast j}\ast \delta_\theta \ast \delta_{\theta'-\theta} = m\ast\delta_{\theta'-\theta} = m  , $$
 which proves (8).

$(8) \, \Rightarrow\, (7)$:
If there exists $\eta\neq m$ in $\Prob_\mu(\T^d)$, then there exists at least one more ergodic measure $\zeta\neq m$ such that $\zeta$ is an extreme point of $\Prob_\mu(\T^d)$.
Choosing $\varphi \in\Cscr(\T^d)$ such that $\int\varphi\, d\zeta \neq \int \varphi\, dm$,
by Birkhoff Ergodic Theorem there exists $\theta\in \T^d$ such that
$$ \lim_{n\to +\infty}
\frac{1}{n}\,\sum_{j=0}^{n-1}(\Qop_\mu^j \varphi)(\theta)=\int \varphi\, d\zeta  \neq \int\varphi\, dm. $$
which contradicts (8).

$(7) \, \Rightarrow\, (6)$:
Consider the compact subgroup $H:=\overline{\cup_{n\geq 1} \Supp^n}$.
If (6) fails then $H\neq \T^d$ and by Lemma \ref{lem1.2} we can construct a stationary
measure $\pi\in\Prob_\mu(\T^d)$ with $\supp(\pi)\subset H$.
This shows that $\pi\neq m$ and hence there is more than one stationary measure.
\end{proof}

\begin{proposition}\label{prop1.3}
If $f$ is ergodic w.r.t. $\mu^\Z\times m$ then
the convergence in item (8) of Proposition~\ref{ergodic charact}
holds uniformly in $\theta\in\T^d$.
\end{proposition}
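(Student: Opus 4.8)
The plan is to upgrade the pointwise Cesàro convergence in item (8) to uniform convergence by a standard equicontinuity-plus-compactness argument, leveraging the unique ergodicity-type statement that item (7) provides. First I would fix $\varphi \in C^0(\T^d)$ and define $S_n \varphi (\theta) := \frac{1}{n}\sum_{j=0}^{n-1} (\Qop_\mu^j \varphi)(\theta)$. Each $\Qop_\mu^j \varphi$ is continuous (since $\Qop_\mu$ maps $C^0$ to $C^0$, being an average of translates), and moreover $\norm{\Qop_\mu^j \varphi}_0 \le \norm{\varphi}_0$, so the family $\{S_n \varphi\}_n$ is uniformly bounded. The key observation is that $\Qop_\mu$ is a contraction for the uniform norm and, more importantly, commutes with translations: $(\Qop_\mu \varphi)(\theta + \beta) = (\Qop_\mu \varphi_\beta)(\theta)$ where $\varphi_\beta(\cdot) = \varphi(\cdot + \beta)$. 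Hence the modulus of continuity of $\Qop_\mu^j \varphi$ is controlled by that of $\varphi$ uniformly in $j$: if $\abs{\varphi(\theta) - \varphi(\theta')} \le \rho(d(\theta,\theta'))$ then the same bound holds for every $\Qop_\mu^j \varphi$ and hence for $S_n\varphi$. Since $\varphi$ is uniformly continuous on the compact torus, $\{S_n\varphi\}_n$ is equicontinuous.

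Next, by the Arzelà–Ascoli theorem, every subsequence of $\{S_n\varphi\}_n$ has a further subsequence converging uniformly to some $g \in C^0(\T^d)$. By item (8) (which we may invoke, as it is equivalent to the hypothesis that $f$ is ergodic w.r.t. $\mu^\Z \times m$), the pointwise limit is the constant $\int \varphi\, dm$ at every $\theta$, so necessarily $g \equiv \int \varphi\, dm$. Since every subsequential uniform limit equals the same constant, the full sequence $S_n \varphi$ converges uniformly to $\int \varphi\, dm$. This proves the claim for each fixed $\varphi \in C^0(\T^d)$.

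Alternatively — and perhaps more cleanly — one can phrase this via the dual side: uniform convergence of $S_n\varphi(\theta)$ over $\theta$ is equivalent to the statement that $\frac{1}{n}\sum_{j=0}^{n-1}(\Qop_\mu^\ast)^j \delta_\theta \to m$ in the weak topology uniformly in $\theta$. Using that $(\Qop_\mu^\ast)^j \delta_\theta = \mu^{\ast j} \ast \delta_\theta$ and that convolution by $\delta_\theta$ is an isometry of $\Prob(\T^d)$ translating the measure, uniformity in $\theta$ reduces to convergence of $\pi_n = \frac{1}{n}\sum_{j=0}^{n-1}\mu^{\ast j}$ to $m$, which is exactly what item (8) at $\theta = 0$ gives (combined with Lemma~\ref{lem1.2} to identify the limit as the unique stationary measure $m$ from item (7)). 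Then $W_1\big(\frac{1}{n}\sum_j (\Qop_\mu^\ast)^j\delta_\theta,\, m\big) = W_1(\pi_n \ast \delta_\theta,\, m \ast \delta_\theta) = W_1(\pi_n, m) \to 0$ independently of $\theta$, since $W_1$ is translation invariant on $\T^d$.

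I expect the main (minor) obstacle to be bookkeeping the translation-invariance/commutation properties carefully: one must check that $\Qop_\mu$ genuinely commutes with the translation operator and that the relevant metrics ($W_1$, the uniform norm, the modulus of continuity) are all translation invariant on $\T^d$, so that the dependence on $\theta$ factors out entirely. None of this is deep — it is the abelian structure of $\T^d$ together with the fact that $\mu$ acts by translations — but it is the crux of why pointwise convergence automatically self-improves to uniform convergence here. I would present the $W_1$-based argument as the main proof since it is the shortest and most transparent.
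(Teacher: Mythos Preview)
Your proposal is correct. Your first approach (equicontinuity of $\{S_n\varphi\}_n$ plus Arzel\`a--Ascoli) is essentially the paper's argument: the paper proceeds by contradiction, extracting a convergent subsequence $\theta_k\to\theta$ from a hypothetical bad sequence $(n_k,\theta_k)$ and then using that $\Qop_\mu^j$ preserves the modulus of continuity of $\varphi$ (phrased as ``the uniform continuity of $\varphi$'') to contradict the pointwise convergence (8) at $\theta$. Your Arzel\`a--Ascoli version and the paper's contradiction version are two packagings of the same equicontinuity-plus-compactness mechanism.

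Your second, $W_1$-based route is a genuinely different and somewhat cleaner argument: since convolution by $\delta_\theta$ is a $W_1$-isometry of $\Prob(\T^d)$ and $m$ is translation-invariant, the $\theta$-dependence disappears entirely and everything reduces to the single statement $W_1(\pi_n,m)\to 0$, which follows from (8) at $\theta=0$ together with the fact that $W_1$ metrizes the weak topology on the compact space $\T^d$. This makes the role of the abelian structure transparent and avoids any subsequence extraction. The only detail you should add is that $W_1$ directly controls only Lipschitz test functions, so for arbitrary $\varphi\in C^0(\T^d)$ a one-line density step is needed (approximate $\varphi$ uniformly by Lipschitz $\psi$ and use $\norm{S_n\varphi - S_n\psi}_0\le\norm{\varphi-\psi}_0$).
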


\begin{proof}
We prove it by contradiction. Assume there $\exists \,\epsilon>0$, $\exists \,n_k\to \infty$ and $\exists\,\theta_k\in \T^d, k\in \N^+$ such that
$$
\abs{
	\frac{1}{n_k}\,\sum_{j=0}^{n_k-1}(\Qop_\mu^j \varphi)(\theta_k)-\int_{\T^d} \varphi\, dm } >\epsilon.
$$
Since $\T^d$ is compact, we can assume $\theta_k\to \theta$ for some $\theta\in \T^d$. Writing
\begin{align*}
& \frac{1}{n_k}\,\sum_{j=0}^{n_k-1}(\Qop_\mu^j \varphi)(\theta)-\int \varphi\, dm    = \\
 & \frac{1}{n_k}\,\sum_{j=0}^{n_k-1}(\Qop_\mu^j \varphi)(\theta)-\frac{1}{n_k}\,\sum_{j=0}^{n_k-1}(\Qop_\mu^j \varphi)(\theta_k)  + \frac{1}{n_k}\,\sum_{j=0}^{n_k-1}(\Qop_\mu^j \varphi)(\theta_k)-\int \varphi\, dm
 \end{align*}
 we have
 \begin{align*}
 & \abs{\frac{1}{n_k}\,\sum_{j=0}^{n_k-1}(\Qop_\mu^j \varphi)(\theta)-\int \varphi\, dm }  \ge \\
&  \abs{\frac{1}{n_k}\,\sum_{j=0}^{n_k-1}(\Qop_\mu^j \varphi)(\theta_k)-\int \varphi\, dm} - \\
& \abs{\frac{1}{n_k}\,\sum_{j=0}^{n_k-1}(\Qop_\mu^j \varphi)(\theta)-\frac{1}{n_k}\,\sum_{j=0}^{n_k-1}(\Qop_\mu^j \varphi)(\theta_k)}  \ge \,   \epsilon-\frac{\epsilon}{2} \, \geq  \, \frac{\epsilon}{2},
\end{align*}
where the second inequality is due to the definition of $\Qop_\mu^j \varphi$ and to the uniform continuity of $\varphi$ on $\T^d$. This contradicts (8).
\end{proof}

\subsection{Uniform convergence of the Birkhoff sums}
We return to the study our original base dynamics $\f \colon X\times \T^d\to X\times \T^d$ defined by~\eqref{base map}, where $X=\Sigmah^\Z$. Since the ergodicity of $\f$ is equivalent to that of its factor $f$, and since these two maps share similar expressions,  to simplify notations, from now on we let $f$ refer to either one of them.

Under the ergodicity assumption, we prove that for a full measure set of points $\omega\in X$, given any continuous observable $\phi \colon X\times\T^d\to \R$, the corresponding Birkhoff time averages converge to the space average uniformly in $\theta\in \T^d$.

\begin{lemma}
	\label{Birkhoff LDE1}
	Let $\muh \in \Prob_c (\Sigmah)$ and assume that $f$ is ergodic w.r.t. $\muh^\Z\times m$. There is a full measure set $X' \subset X$ such that 
	given any observable $\phi\in \Cscr(X\times \T^d)$,
	for all $\omega\in X'$ we have
	$$\lim_{n\to +\infty}
	\frac{1}{n}\,\sum_{j=0}^{n-1}\phi(f^j(\omega,\theta)) = \int \phi\, d(\muh^\Z\times m)$$
	with uniform convergence in $\theta\in\T^d$.
\end{lemma}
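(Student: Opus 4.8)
The plan is to reduce the uniform convergence over $\theta$ to the pointwise-in-$\theta$ statement already available (Proposition~\ref{prop1.3}, or rather its analogue at the level of $\f$), combined with an equicontinuity argument in $\theta$ and an application of Birkhoff's ergodic theorem to handle the random variable $\omega$. First I would fix a countable dense subset $\{\phi_i\}_{i\ge 1}$ of $\Cscr(X\times\T^d)$ (separability of $X\times\T^d$, hence of the space of continuous functions on this compact metric space — note $X=\supp(\muh)^\Z$ is compact since $\muh$ is compactly supported). It suffices to produce, for each fixed $\phi_i$, a full-measure set $X_i'\subset X$ on which the Birkhoff averages of $\phi_i$ converge to $\int\phi_i\,d(\muh^\Z\times m)$ uniformly in $\theta$; then $X':=\bigcap_i X_i'$ works for all $\phi_i$ simultaneously, and a standard $3\epsilon$-approximation argument (uniform norm) extends it to every $\phi\in\Cscr(X\times\T^d)$, using that $\norm{\phi-\phi_i}_0$ small controls the Birkhoff averages uniformly in both $\omega$ and $\theta$.

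So fix $\phi=\phi_i$. For each $\omega\in X$ the map $\theta\mapsto \frac{1}{n}\sum_{j=0}^{n-1}\phi(f^j(\omega,\theta))$ is continuous, and in fact the family of functions $\{\theta\mapsto \phi(f^j(\omega,\theta))\}_{j,\omega}$ is uniformly equicontinuous: $f^j(\omega,\theta)=(\sigma^j\omega,\theta+\alfa(\omega_0)+\cdots+\alfa(\omega_{j-1}))$, so the $\theta$-dependence enters only through a translation, and uniform continuity of $\phi$ on the compact space $X\times\T^d$ gives a common modulus of continuity independent of $j$ and $\omega$. Hence the Birkhoff averages $S_n(\omega,\theta):=\frac1n\sum_{j<n}\phi(f^j(\omega,\theta))$ are equicontinuous in $\theta$, uniformly in $n$ and $\omega$. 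Next, by Birkhoff's ergodic theorem applied to the ergodic system $(\f,\muh^\Z\times m)$ and to the observable $\phi$, for $(\muh^\Z\times m)$-a.e.\ $(\omega,\theta)$ one has $S_n(\omega,\theta)\to\int\phi\,d(\muh^\Z\times m)$; by Fubini there is a full-$\muh^\Z$-measure set $X_i'\subset X$ such that for each $\omega\in X_i'$ the convergence holds for $m$-a.e.\ $\theta$, in particular on a dense set of $\theta$'s. Equicontinuity in $\theta$ then upgrades convergence on a dense set to uniform convergence in $\theta$ (the standard Arzel\`a--Ascoli / uniform-limit argument: an equicontinuous sequence converging pointwise on a dense subset of a compact metric space converges uniformly, and the limit is the constant $\int\phi\,d(\muh^\Z\times m)$ since it is continuous and equals that constant on a dense set). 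This gives the claim for $\phi_i$ on $X_i'$, and intersecting over $i$ finishes the proof.

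The one point requiring a little care — and the place I expect the main (minor) obstacle — is the interchange between the $\omega$-a.e.\ statement and the "for every $\theta$" statement: Birkhoff gives convergence only for $(\muh^\Z\times m)$-a.e.\ pair, so a priori for a good $\omega$ one only controls $m$-a.e.\ $\theta$, and one must invoke equicontinuity in $\theta$ to pass to all $\theta$. Since $m$-a.e.\ means a dense set of $\theta$ (indeed full measure), and the family $\{S_n(\omega,\cdot)\}_n$ is uniformly equicontinuous, this passage is legitimate; one could alternatively run this through Proposition~\ref{prop1.3} after first integrating out $\theta$, but the equicontinuity route is cleaner and self-contained. Everything else is routine: the compactness of $X$, separability of $\Cscr(X\times\T^d)$, the uniform-norm approximation, and Fubini.
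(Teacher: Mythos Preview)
Your proof is correct and follows essentially the same scheme as the paper's: separability of $C^0(\mathcal{X}\times\T^d)$ to reduce to countably many observables, Birkhoff's theorem plus Fubini to control almost every $\omega$, and uniform continuity of $\phi$ (equivalently, equicontinuity in $\theta$ of the Birkhoff averages, since the $\theta$-dependence is through a pure translation) to upgrade to uniform convergence in $\theta$.

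The one genuine difference is in how you pass from ``$m$-a.e.\ $\theta$'' to ``every $\theta$''. The paper fixes a single $\theta_0$ with $\muh^\Z(\mathscr{B}_{\theta_0})=1$, establishes the weak$^\ast$ convergence $\frac{1}{n}\sum_j \delta_{f^j(\omega,\theta_0)}\to \muh^\Z\times m$, and then \emph{convolves} with $\delta_{\theta-\theta_0}$ (using that $f^j(\omega,\theta)=f^j(\omega,\theta_0)+(\theta-\theta_0)$ in the second coordinate and that $\muh^\Z\times m$ is translation-invariant) to transport the convergence to every $\theta$; only afterwards does it invoke uniform continuity to get uniformity. You instead use equicontinuity directly: for each good $\omega$ the convergence holds on a full-$m$-measure (hence dense) set of $\theta$'s, and equicontinuity of $\{S_n(\omega,\cdot)\}_n$ promotes this to uniform convergence. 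Your route is slightly more self-contained and bypasses the convolution argument; the paper's route makes the role of the $\T^d$-action more explicit. Both exploit exactly the same structural fact (the $\theta$-variable enters only through a translation), so the difference is one of packaging rather than substance.
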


\begin{proof} 
Let $\Supp:= \supp \, \muh$ and $\X :=\Supp^\Z$. Since $\X$ is compact, $\sigma$-invariant and $\muh^Z (\Z)=1$, in order to prove the above convergence of the Birkhoff means for observables $\phi\in \Cscr(X\times \T^d)$ we may simply consider their restrictions to the compact, metrizable space $\X \times \T^d$.
As $\Cscr(\X\times \T^d)$ is separable, it admits a countable and dense subset $\{\phi_j \colon j\geq 1 \}$.

Denote by
$$ \Bscr_j:= \big\{ (\omega,\theta)\in \X\times \T^d \colon  \lim_{n\to +\infty} \frac{1}{n}\,\sum_{i=0}^{n-1} \phi_j(f^i(\omega,\theta))= \int \phi_j\, d(\muh^\Z\times m)
\big\}.$$ If we denote $\Bscr=\bigcap_{j\geq 1}\Bscr_j$, then by the Birkhoff Ergodic Theorem $(\muh^\Z\times m)(\Bscr)=1$. Thus for $m$-a.e. $\theta\in \T^d$, $\muh^\Z(\Bscr_{\theta})=1$ where $\Bscr_{\theta}=\left\{\omega\in \X \colon (\omega,\theta)\in \Bscr \right\}$.

Fix $\theta_0\in \T^d$ such that $\muh^\Z(\Bscr_{\theta_0})=1$. For $\muh^\Z$-a.e. $\omega$ (in fact for $\omega\in \Bscr_{\theta_0} \subset \X$), we have that for all $\phi\in \Cscr(X\times \T^d)$ and $\epsilon>0$, there exists $n_0$ such that for all $n\geq n_0$ and for $j$ large enough
\begin{align*}
&\abs{ \frac{1}{n}\,\sum_{i=0}^{n-1} \phi(f^i(\omega,\theta_0))- \int \phi\, d(\muh^\Z\times m)}\\
\leq &\abs{\frac{1}{n}\,\sum_{i=0}^{n-1} \phi(f^i(\omega,\theta_0))-\frac{1}{n}\,\sum_{i=0}^{n-1} \phi_j(f^i(\omega,\theta_0))}+\\
&\abs{\frac{1}{n}\,\sum_{i=0}^{n-1} \phi_j(f^i(\omega,\theta_0))-\int \phi_j\, d(\muh^\Z\times m)}+\\
&\abs{\int \phi_j\, d(\mu^\Z\times m)-\int \phi\, d(\muh^\Z\times m)}\\
\leq & \frac{\epsilon}{3}+\frac{\epsilon}{3}+\frac{\epsilon}{3} \,
\leq \,  \epsilon,
\end{align*}
where in the second inequality we used the density of $\{\phi_j \colon j\ge 1\}$ in  $\Cscr(\X\times \T^d)$ and the definition of $\Bscr$. This shows that for $\muh^\Z$-a.e. $\omega$, the following weak* convergence holds:
\begin{equation}\label{theta0}
\frac{1}{n}\,\sum_{i=0}^{n-1}\delta_{f^i(\omega,\theta_0)}\rightarrow \muh^\Z\times m .
\end{equation}

This in fact holds for any $\theta\in \T^d$ (not just for the given $\theta_0$) by the $m$-invariance of the torus translation.
Indeed, the action of $\T^d$ on $X \times \T^d$ given by $\theta \cdot (\om, \theta') = (\om, \theta + \theta')$ induces a convolution of measures and a direct computation shows that
$\delta_{\theta-\theta_0}\ast\delta_{f^j(\omega,\theta_0)}=\delta_{f^j(\omega,\theta)}$ for all $j\geq 1$ and $\delta_{\theta-\theta_0}\ast(\muh^{\Z}\times m)=\muh^{\Z}\times m$. Then by (\ref{theta0}) and the weak* continuity of the convolution operation, for $\muh^\Z$-a.e. $\omega$ and every $\theta\in \T^d$,
$$
\frac{1}{n}\,\sum_{i=0}^{n-1}\delta_{f^i(\omega,\theta)}\rightarrow \muh^\Z\times m.
$$

This is equivalent to saying that for $\muh^\Z$-a.e. $\omega$ (that is, for $\omega\in \Bscr_{\theta_0}$), for all $\phi\in \Cscr(X\times \T^d)$ and all $\theta\in\T^d$,
$$
\frac{1}{n}\,\sum_{j=0}^{n-1}\phi(f^j(\omega,\theta)) \to \int \phi\, d(\muh^\Z\times m).
$$

We prove the uniform convergence in $\theta$ by contradiction. Assume that there are $\omega\in \Bscr_{\theta_0}$, $\epsilon>0$, $n_k\to \infty$ and $\theta_k\in \T^d$ for all $k \ge 1$ such that
$$
\abs{\frac{1}{n_k}\,\sum_{j=0}^{n_k-1}\phi(f^j(\omega,\theta_k))-\int \phi\, d(\mu^\Z\times m)} \geq \epsilon.
$$

Since $\T^d$ is compact, by passing to a subsequence we may assume that $\theta_k\to\theta$. Then for $k$ sufficiently large we have:
\begin{align*}
&\abs{\frac{1}{n_k}\,\sum_{j=0}^{n_k-1}\phi(f^j(\omega,\theta))-\int \phi\, d(\muh^\Z\times m))}\\
\ge& \abs{\frac{1}{n_k}\,\sum_{j=0}^{n_k-1}\phi(f^j(\omega,\theta_k))-\int \phi\, d(\muh^\Z\times m))}\\
- & \abs{\frac{1}{n_k}\,\sum_{j=0}^{n_k-1}\phi(f^j(\omega,\theta))-\frac{1}{n_k}\,\sum_{j=0}^{n_k-1}\phi(f^j(\omega,\theta_k))}\\
\ge & \epsilon-\frac{\epsilon}{2}
\ge \frac{\epsilon}{2}.
\end{align*}

The second inequality follows from the fact that, for $k$ large enough, 
$$\abs{\phi (\om', \theta_k) -  \phi (\om', \theta)} < \frac{\ep}{2} \quad \forall \om' \in \X ,$$ 
which is due to the uniform continuity of $\phi$ on the compact set $\X \times \T^d$.  This contradicts the pointwise convergence for $\theta$.
\end{proof}

The next result establishes a large deviations type estimate over ergodic mixed random-quasiperiodic systems, for continuous observables that depend on finitely many coordinates. The estimate is uniform in the quasiperiodic variable $\theta$ and also in the measure determining the random variable.

\begin{theorem}\label{base ldt}
Let $\muh_0 \in \Prob_c (\Sigmah)$ be an ergodic measure w.r.t. $f$ and let $\phi \in C_b (X \times \T^d)$ be an observable that depends on a finite number of coordinates of $\om \in X$.
Given any $\ep > 0$, there are  $\delta = \delta (\ep, \muh_0, \phi) > 0$,  $\nbar = \nbar(\ep, \muh_0, \phi) \in \N$ and $c = c (\ep, \muh_0, \phi) > 0$ such that for all $\muh \in \Prob_c (\Sigmah)$ with $W_1 (\muh, \muh_0) < \delta$, for all $\theta \in \T^d$ and for all $n \ge \nbar$ we have
\begin{equation}\label{mixed base LDT}
\muh^\Z \left\{ \om \in X \colon \abs{ \frac{1}{n} \sum_{j=0}^{n-1} \phi (f^j (\om, \theta) ) - \int_{X \times \T^d} \phi  \, d(\muh^\Z\times m) }  \ge  \ep \right\}  <  e^{- c  n}  \, .
\end{equation}

A similar estimate also holds with the integral in~\eqref{mixed base LDT} taken with respect to the fixed measure $\muh_0^\Z\times m$.
\end{theorem}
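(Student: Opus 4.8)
The plan is to establish a large deviations estimate by combining the uniform Birkhoff convergence from Lemma~\ref{Birkhoff LDE1} with a concentration-of-measure argument exploiting the product structure of $\muh^\Z$ and the finite-coordinate dependence of $\phi$. First I would reduce to the factor system: since $\phi$ depends on finitely many coordinates of $\om$, say on $\om_{-r},\dots,\om_r$, and the dynamics $f$ shifts coordinates, each Birkhoff sum $\frac{1}{n}\sum_{j=0}^{n-1}\phi(f^j(\om,\theta))$ depends on $\om_0,\dots,\om_{n+r-1}$ and on $\theta$. I would work with the observable $\Phi_{n,\theta}(\om) := \frac{1}{n}\sum_{j=0}^{n-1}\phi(f^j(\om,\theta))$, which is a sum of terms $\frac{1}{n}\psi_j$, where each $\psi_j$ depends on a \emph{window} of $2r+1$ consecutive coordinates. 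Grouping the indices $j$ into residue classes modulo $2r+1$ produces $2r+1$ sums, each of which is an average of \emph{independent} bounded random variables (independent because their coordinate windows are disjoint), shifted by a deterministic rotation of $\theta$.

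The core estimate is then a routine application of a Hoeffding / Azuma-type inequality (or directly Cramér's theorem for bounded i.i.d.\ variables) to each of these $2r+1$ sub-sums, giving exponential concentration of $\Phi_{n,\theta}(\om)$ around its mean $\EE_{\muh^\Z}[\Phi_{n,\theta}] = \frac{1}{n}\sum_{j=0}^{n-1}(\Qop^j\phi)(\cdot,\theta)$-type average, with a rate $c$ depending only on $\osc(\phi)$ and $r$, hence uniform in $\theta$ and in $\muh$. The remaining task is to control the \emph{bias}: the distance between $\EE_{\muh^\Z}[\Phi_{n,\theta}]$ and the space average $\int \phi\, d(\muh^\Z\times m)$. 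For $\muh = \muh_0$, Lemma~\ref{Birkhoff LDE1} gives that $\Phi_{n,\theta}(\om) \to \int\phi\,d(\muh_0^\Z\times m)$ uniformly in $\theta$ for $\muh_0^\Z$-a.e.\ $\om$; integrating and using dominated convergence shows $\EE_{\muh_0^\Z}[\Phi_{n,\theta}] \to \int\phi\,d(\muh_0^\Z\times m)$ uniformly in $\theta$, so for $n\ge\nbar$ the bias is below $\ep/2$. Thus for $\muh=\muh_0$ the concentration bound at level $\ep/2$ yields~\eqref{mixed base LDT} at level $\ep$.

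To obtain the estimate uniformly for all $\muh$ with $W_1(\muh,\muh_0)<\delta$, I would note that $\EE_{\muh^\Z}[\Phi_{n,\theta}]$ is a finite sum of integrals of the form $\int \phi(f^j(\om,\theta))\,d\muh^{\otimes(2r+1)}$ against product measures built from $\muh$; since $\phi$ is bounded and depends on finitely many coordinates, and (after the Urysohn-type smoothing of Lemma~\ref{Urysohn}, if $\phi$ is merely bounded) each such integral is a Lipschitz function of $\muh$ in $W_1$ on the relevant finite-dimensional marginals, choosing $\delta$ small forces $|\EE_{\muh^\Z}[\Phi_{n,\theta}] - \EE_{\muh_0^\Z}[\Phi_{n,\theta}]| < \ep/4$ for $n$ in the bounded range $\nbar \le n \le \nbar'$, and a continuity-of-means argument (uniform in $n$, again from finite-coordinate dependence and boundedness) extends this for all $n\ge\nbar$; the concentration rate $c$, being a function only of $\osc(\phi)$ and $r$, is unchanged. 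Combining bias $<\ep/2$ with concentration at level $\ep/2$ gives~\eqref{mixed base LDT}; replacing $\int\phi\,d(\muh^\Z\times m)$ by $\int\phi\,d(\muh_0^\Z\times m)$ costs another $\ep/2$ absorbed similarly, proving the final assertion.

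The main obstacle I anticipate is not the concentration step (which is standard once the independence-by-residue-classes decomposition is set up) but the \emph{uniformity in $\muh$} of the bias control: one must argue that the convergence $\EE_{\muh^\Z}[\Phi_{n,\theta}]\to\int\phi\,d(\muh^\Z\times m)$ holds with a rate $\nbar$ that can be chosen simultaneously for all $\muh$ near $\muh_0$, which requires either a quantitative version of the ergodic averaging (via the spectral gap / equidistribution encoded in condition (4) or (8) of Theorem~\ref{ergodic charact}) that is stable under $W_1$-perturbation of $\muh$, or a compactness argument showing the relevant family of Cesàro means is equicontinuous in $\muh$. Handling the case where $\phi$ is only bounded (not continuous) cleanly, via Lemma~\ref{Urysohn}, is a minor additional technicality.
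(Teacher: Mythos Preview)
Your concentration step has a genuine gap. You claim that each term $\psi_j:=\phi(f^j(\om,\theta))$ depends only on a window of $2r+1$ consecutive coordinates of $\om$, so that grouping by residue classes mod $2r+1$ produces sums of independent random variables. This is not the case here: since $f^j(\om,\theta)=(\sigma^j\om,\,\theta+\alfa(\om_0)+\cdots+\alfa(\om_{j-1}))$, the term $\psi_j$ depends on the window $\om_{j-r},\ldots,\om_{j+r}$ \emph{and} on all of $\om_0,\ldots,\om_{j-1}$ through the accumulated translation. Terms in the same residue class therefore share coordinates, and the independence claim fails. For the same reason a direct McDiarmid/bounded-differences argument on $\Phi_{n,\theta}$ is useless: changing a single coordinate $\om_i$ perturbs the torus variable in \emph{every} subsequent term, so the bounded-difference constants are $O(1)$ rather than $O(1/n)$, and no exponential concentration follows. (Your bias-control step has a related problem: the Lipschitz constant of $\muh\mapsto \EE_{\muh^\Z}[\Phi_{n,\theta}]$ in $W_1$ grows linearly in $n$, so the ``continuity-of-means uniform in $n$'' you invoke does not hold without further argument.)

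The paper's proof handles exactly this difficulty. It uses the uniformity in $\theta$ from Lemma~\ref{Birkhoff LDE1} to define, for $\muh_0^\Z$-a.e.\ $\om$, a stopping time $n(\om)$ that does \emph{not} depend on $\theta$; the sets $\U_N=\{\om:n(\om)\le N\}$ then depend on finitely many coordinates of $\om$ alone. A standard stopping-time decomposition gives, for all $\theta$,
\[
\frac{1}{n}\sum_{j=0}^{n-1}\phi(f^j(\om,\theta)) \;<\; a(\phi)+2\ep + \frac{1}{n}\sum_{j=0}^{n-1}\ind_{\U_N^{\complement}}(\sigma^j\om),
\]
so the problem is reduced to concentration of a Birkhoff average \emph{over the shift only}, for an observable $\ind_{\U_N^{\complement}}$ of finitely many $\om$-coordinates and no $\theta$-dependence. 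Now McDiarmid applies with bounded differences $O(k/n)$. The perturbation in $\muh$ enters only through $\muh^\Z(\U_N^{\complement})$, which is the $\muh^k$-measure of a \emph{fixed} closed set in $\Sigmah^k$ and is controlled near $\muh_0$ via Lemma~\ref{Urysohn}; this replaces your uniform-in-$n$ bias argument. The decoupling of $\om$ and $\theta$ via the $\theta$-uniform stopping time is the key idea you are missing.
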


\begin{proof}
We use a stopping time argument where the times are chosen uniformly in $\theta$; this way we decouple the variables $\om$ and $\theta$ and reduce the problem to a concentration inequality over the Bernoulli shift $\sigma$ for an observable that depends on finitely many random coordinates.

Fix $\ep>0$.
It is easy to see that if we established~\eqref{mixed base LDT} with some constant $a (\phi)$ instead of $ \int  \phi \,  d(\nu^\Z\times m)$, then we would have
$$\abs{a (\phi) -  \int_{X \times \T^d} \phi  \, d(\nu^\Z\times m)} < \ep + \norm{\phi}_0 \, e^{- c  n}  < 2 \ep $$
for $n$ large enough, which would therefore imply~\eqref{mixed base LDT} as written. We will then establish the estimate with $a (\phi) := \int \phi \, d (\muh_0^\Z \times m)$.

Note also that replacing $\phi$ by $- \phi$, it suffices to prove the upper bound in~\eqref{mixed base LDT}, namely that for $\om$ outside  an exponentially small set with respect to the $\nu^\Z$ measure, and for all $\theta\in\T^d$ we have
\begin{equation}\label{mixed base LDT2}
\frac{1}{n} \sum_{j=0}^{n-1} \phi (f^j (\om, \theta) ) <  a (\phi)  + \ep \, .
\end{equation}

Finally, replacing  $\phi$ by $\phi - \inf \phi$, we may assume  that $\phi \ge 0$.

Using Lemma~\ref{Birkhoff LDE1}, for $\muh_0^\Z$-a.e. $\om \in X$ we can define $n (\om) = n (\om, \ep)$ to be the first integer such that for all $\theta \in \T^d$,
$$
\frac{1}{n(\om)} \sum_{j=0}^{n(\om)-1} \phi (f^j (\om, \theta) ) <  a (\phi) + \ep \, .
$$

Given $m \in \N$, let
\begin{align*}
\U_m & := \{ \om \in X \colon n (\om) \le m \}  \\
& = \bigcup_{k=1}^m  \, \left\{ \om \in X \colon \frac{1}{k} \sum_{j=0}^{k-1} \phi \circ f^j (\om, \theta) ) <  a (\phi) + \ep \quad \forall \theta \in \T^d  \right\} \, .
\end{align*}

Since $\phi$ and $f$ are continuous and $\T^d$ is compact, the set $\U_m$ is open. 
Moreover, as the sequence of sets $\{\U_m\}_{m\ge1}$ increases to a full $\muh_0^Z$-measure set, there is $N = N (\ep, \muh_0, \phi)$ such that $\muh_0^\Z (\U_N\comp) < \ep$. 

Note that since the observable $\phi$ depends on a finite number (say $k_0$) of coordinates,  the set $\U_N$ is determined by  $k := k_0+N$ coordinates, where $k = k (\ep, \muh_0, \phi)$. The same of course holds for its complement $\U_N\comp$, which is a closed set. Let $L \subset \Sigmah^k$ be the projection of $\U_N\comp$ in the $k$ coordinates on which it depends. Then $L$ must be a closed set and for any $\muh \in \Prob_c (\Sigmah)$ we have $ \muh^\Z (\U_N\comp)= \muh^k (L) $. 

We claim that if a measure $\muh$ is chosen sufficiently close to $\muh_0$ relative to the Wasserstein distance, we can ensure that the $\muh^\Z$ measure of $\U_N\comp$ is of order $\ep$ as well. Indeed, applying Lemma~\ref{Urysohn} to the closed set $L\subset \Sigmah^k$, there are an open set $D \supset L$ such that 
$$\muh_0^k (D) \le \muh_0^k (L) + \ep =  \muh_0^\Z ( \U_N\comp) + \ep < 2 \ep $$  
and  a Lipschitz continuous function $g \colon \Sigmah^k \to [0, 1]$ such that $\ind_L \le g \le \ind_D$ and 
$\norm{g}_\Lip = C = C (\ep, L, k) = C (\ep, \muh_0, \phi)$.

It is easy to see that for any $\muh \in \Prob_c (\Sigmah)$ we have
$$W_1 (\muh^k, \muh_0^k) \le k \, W_1 (\muh, \muh_0) \, ,$$
so
$$
\abs{ \int_{\Sigmah^k} g \, d (\muh^k - \muh_0^k) }  \le C \, W_1 (\muh^k, \muh_0^k) \le C k  \, W_1 (\muh, \muh_0) .
$$

Then
\begin{align*}
\nu^\Z (\U_N\comp) & = \nu^k ( L) = \int_{\Sigmah^k} \ind_L \, d \muh^k \le  \int_{\Sigmah^k} g \, d \muh^k
\le  \int_{\Sigmah^k} g \, d \muh_0^k + C k \, W_1 (\muh, \muh_0)\\
& \le \int_{\Sigmah^k} \ind_D \, d \muh_0^k +  C k\, W_1 (\muh, \muh_0) = \muh_0^k (D) + C k \, W_1 (\muh, \muh_0) < 3 \ep \, ,
\end{align*}
provided that $W_1 (\muh, \muh_0) < \delta =: \frac{\ep}{C k}$.

\smallskip

By design, for all $\om \in \U_N$ we have $1 \le n (\om) \le N$ and for all $\theta \in \T^d$,
\begin{equation}\label{base eq1}
 \sum_{j=0}^{n(\om)-1} \phi (f^j (\om, \theta) ) \le n (\om) \, a (\phi)  + n (\om) \, \ep \, .
 \end{equation}

Fix any $\om = \{\om_j\}_{j\in\Z} \in X$ and define inductively
a sequence of integers $\{n_k = n_k (\om)\}_{k\ge1}$  as follows.

If $\om \in \U_N$ then $n_1 := n (\om)$, otherwise $n_1 :=1$.

If $\sigma^{n_1} \om \in \U_N$ then $n_2 := n (\sigma^{n_1} \om)$, otherwise $n_2 :=1$.

If, for $k\ge1$, we have $\sigma^{n_k+\ldots+n_1} \om \in \U_N$ then $n_{k+1} := n (\sigma^{n_k+\ldots+n_1} \om)$, otherwise $n_{k+1} := 1$. Note that $1\le n_k \le N$ for all $k\ge1$.

Using~\eqref{base eq1} (and the fact that $\phi \ge 0$), for all $\theta \in \T^d$, the Birkhoff sum of length $n_1$ with starting phase  $(\om, \theta)$ has the bound
$$ \sum_{j=0}^{n_1-1} \phi (f^j (\om, \theta) ) \le n_1  a (\phi)  + n_1 \, \ep  +  \norm{\phi}_{0} \ind_{\U_N\comp} (\om)  \, .$$

Similarly, the Birkhoff sum of length $n_2$ with starting phase $f^{n_1} (\om, \theta) = (\sigma^{n_1} \om, \theta + \alfa (\om_0) + \ldots + \alfa(\om_{n_1-1}))$ has the bound
$$ \sum_{j=0}^{n_2-1} \phi (f^{j+n_1} (\om, \theta) ) \le n_2 a (\phi)    + n_2 \, \ep +  \norm{\phi}_{0} \ind_{\U_N\comp} (\sigma^{n_1} \om)  \, .$$

In general, for $k\ge 1$, the Birkhoff sum of length $n_{k+1}$ with starting phase $f^{n_k+\ldots + n_1} (\om, \theta) = (\sigma^{n_k+\ldots+n_1} \om, \theta + \alfa(\om_0) + \ldots +  \alfa (\om_{n_k+\ldots+n_1 -1}) )$ has the bound
$$ \sum_{j=0}^{n_{k+1}-1} \phi (f^{j+n_k + \ldots + n_1} (\om, \theta) ) \le n_{k+1} a (\phi)   \,  + \, n_{k+1}  \ep \, + \,   \norm{\phi}_{0} \ind_{\U_N\comp} (T^{n_k+\ldots+n_1} \om)   .$$

Let $\nbar = \nbar (\ep, \mu, \phi) :=  N \max \left\{ \frac{\norm{\phi}_0}{\ep}, 1\right\}$, so $\nbar \ge N \ge n_1$. Fix any $n \ge \nbar$. Since $n_1 < n_1 + n_2 < \ldots < n_1+\ldots + n_k < \ldots  $ , there is $p \ge 1$ such that
$n =  n_1 + \ldots n_{p} + m$, where $0 \le m < n_{p+1} \le N$.

It follows that
\begin{align*}
\sum_{j=0}^{n-1} \phi (f^j (\om, \theta) ) & =   \sum_{j=0}^{n_1+\ldots+n_p-1} \phi (f^j (\om, \theta) ) +
 \sum_{j=0}^{m-1} \phi (f^{j+n_1+\ldots+n_p} (\om, \theta) ) \\
 & =   \sum_{k=0}^{p-1}  \sum_{j=0}^{n_{k+1}-1}\phi (f^{j+n_k+\ldots+n_1} (\om, \theta) ) \\
 & \kern1em +
 \sum_{j=0}^{m-1} \phi (f^{j+n_1+\ldots+n_p} (\om, \theta) ) \, ,
 \end{align*}

 hence
 \begin{align*}
\sum_{j=0}^{n-1} \phi (f^j (\om, \theta) )
 & \le (n_1+\ldots + n_p) a (\phi) + (n_1+\ldots + n_p) \ep \\
 &  \kern1em + \norm{\phi}_{0} \sum_{k=0}^{p-1} \ind_{\U_N\comp} (\sigma^{n_k+\ldots+n_1} \om)
 + m \norm{\phi}_{0} \\
 & \le n a (\phi)+ n \ep + \sum_{j=0}^{n-1} \ind_{\U_N\comp} (\sigma^j \om)  + N \norm{\phi}_{0} \\
 & <   n a (\phi) + 2 n \ep + \sum_{j=0}^{n-1} \ind_{\U_N\comp} (\sigma^j \om) \, .
\end{align*}

We obtained the following: for all $\om \in X$, $\theta \in \T^d$ and $n\ge\nbar$,
\begin{equation}\label{split qp random}
\frac{1}{n} \sum_{j=0}^{n-1} \phi (f^j (\om, \theta) ) < a (\phi) + 2 \ep + \frac{1}{n} \sum_{j=0}^{n-1} \ind_{\U_N\comp} (\sigma^j \om) \, .
\end{equation}

It remains to estimate the Birkhoff average over the Bernoulli shift of the function $\ind_{\U_n\comp}$. Since $\ind_{\U_n\comp}$ depends on $k$ coordinates, its $n$-th Birkhoff average depends on $n+k-1$ coordinates. Hence the following function is well defined (and it is measurable):
$$h \colon \Sigmah^{n+k-1} \to \R, \quad h (x_0, \ldots, x_{n+k-2}) := \frac{1}{n} \sum_{j=0}^{n-1} \ind_{\U_N\comp} (\sigma^j \om) \, ,$$
where $\om = \{\om\}_{j\in \Z} \in X$ with $\om_0 = x_0, \ldots, \om_{n-k-2} = x_{n-k-2}$.

Because of the dependence of $\ind_{\U_N\comp}$ on $k$ coordinates, the function $h$ satisfies the following bounded differences property:
\begin{align*}
\left| h (x_0, \ldots, x_{i-1}, x_i, x_{i+1}, \ldots, x_{n+k-2}) \right. & \\
\left. - h (x_0, \ldots, x_{i-1}, x_i', x_{i+1}, \ldots, x_{n+k-2}) \right| &
\le \frac{2 k}{n} \norm{h}_\infty = \frac{2 k}{n} \, .
\end{align*}

Then by McDiarmid's inequality (see~\cite[Theorem 3.1]{McD}), for any probability measure $\muh$ on $\Sigmah$ there is an exceptional set $\B_n \subset \Sigmah^{n+k-1}$ with $\muh^{n+k-1} (\B_n) < e^{-\frac{\ep^2}{2 k^2} n} $, so that for $(\om_0, \ldots, \om_{n+k-2}) \notin \B_n$  we have:
$$ h  (\om_0, \ldots, \om_{n+k-2}) - \int h \, d \nu^{n+k-1} <  \ep  \, .$$

Clearly
\begin{align*}
& \int h (\om_0, \ldots, \om_{n+k-2}) \, d \muh^{n+k-1} (\om_0, \ldots, \om_{n+k-2})   \\
 = & \int \frac{1}{n} \sum_{j=0}^{n-1} \ind_{\U_N\comp} (\sigma^j \om) \, d \muh^\Z (\om)  \\
 = & \int \ind_{\U_N\comp} (\om) \, d \muh^\Z (\om)
= \muh^\Z (\U_N\comp) < 3 \ep \, ,
\end{align*}
which when combined with~\eqref{split qp random} implies~\eqref{mixed base LDT2}.
\end{proof}

\section{The fiber dynamics}\label{fiber}
In this section we formally introduce the concept of mixed random-quasiperiodic cocycle, present a motivating example and study the upper semicontinuity of its maximal Lyapunov exponent. 

\subsection{The group of quasiperiodic cocycles}
A quasiperiodic cocycle is a skew-product map of the form 
$$\T^d \times \R^m \ni (\theta, v) \mapsto \left( \tau_\alpha (\theta),  A(\theta) v \right) \in \T^d \times \R^m \, ,$$
where $\tau_\alpha (\theta) = \theta + \alpha$ is a translation on $\T^d$ by a rationally independent frequency $\alpha \in \T^d$ and $A \in C^0 (\T^d, \SL_m (\R))$ is a continuous matrix valued function on the torus.

This cocycle can thus be identified with the pair $(\alpha, A)$. Consider the set
$$\Gscr=\Gscr(d,m):= \T^d\times C^0(\T^d,\SL_m(\R))$$
of all quasiperiodic cocycles.  

This set is a Polish metric space when equipped with the product metric (in the second component we consider the uniform distance). The space $\Gscr$ is also a group, and in fact a topological group, with the natural composition and inversion operations
\begin{align*}
(\alpha,A)\circ (\beta, B) &:= (\alpha+\beta, (A\circ \tau_\beta) \, B)  \\
(\alpha,A)^{-1} &:= ( -\alpha, (A\circ \tau_{-\alpha})^{-1} ) \, .
\end{align*}

Given $\muh \in \Prob_c (\Gscr)$ let $\om = \left\{ \om_n \right\}_{n\in\Z}$, $\om_n = (\alpha_n, A_n)$ be an i.i.d. sequence of random variables  in $\Gscr$  with law $\muh$. Consider the corresponding multiplicative process in the group $\Gscr$
\begin{align*}
\Pi_n & = \om_{n-1} \circ \ldots \circ \om_1 \circ \om_0 \\
& = \left( \alpha_{n-1} + \ldots + \alpha_1 + \alpha_0, \, ( A_{n-1} \circ \tau_{\alpha_{n-2} + \ldots + \alpha_0}) \ldots (A_1 \circ \tau_{\alpha_0} ) \, A_0   \right) \, .
\end{align*}

In order to study this process in the framework of ergodic theory, we model it by the iterates of a  linear cocycle.

 \subsection{Mixed random-quasiperiodic cocycles} Given $\muh \in \Prob_c (\Gscr)$, let $\Sigmah \subset \Gscr$ be a closed subset (thus a a Polish space as well) such that $\Sigmah \supset \supp \muh$. Depending on what will be convenient in a specific situation, $\Sigmah$ can be the entire space $\Gscr$, or a compact set, say $\supp \muh$ or, for a given constant $L < \infty$, the set $\Gscr_L := \{ (\alpha, A) \in \Gscr \colon \norm{A}_0 \le L \}$.
In any case, the choice of the set $\Sigmah \supset \supp \muh$ will not influence the definitions and results to follow.

We regard $(\Sigmah, \muh)$ as a space of symbols and consider, as before, the shift $\sigma$ on the space $X := \Sigmah^\Z$ of sequences $\om = \left\{ \om_n \right\}_{n\in\Z}$ endowed with the product measure $\muh^Z$ and the product topology (which is metrizable). 
The standard projections 
\begin{align*}
\alfa \colon \Sigmah \to \T^d, & \qquad \alfa (\alpha, A) = \alpha \\
\Ascr \colon \Sigmah \to C^0 (\T^d, \SL_m (\R)), & \quad \quad \Ascr (\alpha, A) = A
\end{align*}
determine the linear cocycle $F=F_{(\alfa,\Ascr)} \colon X\times\T^d\times\R^m \to X\times\T^d\times\R^m$ defined by
$$ F(\omega,\theta, v) := \left(\sigma \omega, \theta+\alfa(\omega_0), \Ascr(\omega_0)(\theta)\, v \right)  .$$
The non-invertible version of this map, with the same expression, is defined on $\Xplus\times \T^d\times \R^m$, where $\Xplus=\Sigmah^{\N}$.

Thus the base dynamics of the cocycle $F$ is the mixed random-quasiperiodic map 
$$ X \times \T^d \ni (\om, \theta) \mapsto \left(\sigma \om, \theta + \alfa (\om_0) \right) \in X \times \T^d ,$$
while the fiber action is induced by the map
$$X \times \T^d \ni ( \om, \theta ) \mapsto \Ascr (\om, \theta) =: \Ascr (\om_0) (\theta) \in \SL_m (\R) .$$
The skew-product $F$ will then be referred to as a mixed random-quasiperiodic cocycle.

For $\om = \{ \om_n \}_{n\in\Z} \in X$ and $j \in \N$ consider the composition of random translations
\begin{align*}
\tau_\om^j & := \tau_{\alfa(\om_{j-1})} \circ \ldots \circ \tau_{\alfa(\om_0)} 
 = \tau_{ \alfa(\om_{j-1}) + \ldots +  \alfa(\om_0) } = \tau_{\alfa (   \om_{j-1} \circ \ldots \circ \, \om_0  ) } \, .
\end{align*}

The iterates of the cocycle $F$ are then given by
$$F^n (\om, \theta, v) = \left( \sigma^n \om, \tau_\om^n (\theta), \, \Ascr^n (\om) (\theta) v \right) ,$$
where
\begin{align*}
\Ascr^n (\om) & = \Ascr \left( \om_{n-1} \circ \ldots \circ \om_1 \circ \om_0 \right) \\
& = \left( \Ascr (\om_{n-1}) \circ \tau_\om^{n-2}  \right) \, \ldots \,  \left( \Ascr (\om_{1}) \circ \tau_\om^{0}  \right) \, \Ascr (\om_0) \, .
\end{align*}
Thus  $\Ascr^n(\omega)$ can be interpreted as a random product of
quasiperiodic cocycles. For convenience we also denote $\Ascr^n (\om, \theta) := \Ascr^n (\om) (\theta)$.

\medskip

By the subadditive ergodic theorem, the limit of $\displaystyle \frac{1}{n} \, \log \norm{ \Ascr^n (\om) (\theta)}$ as $n\to\infty$ exists for $\muh^\Z \times m$ a.e. $(\om, \theta) \in X \times \T^d$. If the base dynamics $f$ is ergodic w.r.t. 
$\muh^\Z \times m$, then this limit is a constant that depends only on the measure $\muh$ and it is called the maximal Lyapunov exponent of the cocycle $F$, which we denote by $L_1 (\muh)$. Thus
\begin{align*}
L_1 (\muh) & = \lim_{n\to\infty} \frac{1}{n} \, \log \norm{ \Ascr^n (\om) (\theta)} \quad \text{for} \quad \muh^\Z \times m \text{ a.e. } (\om, \theta) \\
& = \lim_{n\to\infty} \, \int_{X \times \T^d}  \frac{1}{n} \, \log \norm{ \Ascr^n (\om) (\theta)} \, d ( \muh^\Z \times m ) \, .
\end{align*}

An important problem, to be studied more in depth in future projects concerns the continuity properties of the map $ \muh \mapsto L_1 (\muh)$.

\begin{remark}\label{alt def}
An alternative, somewhat more particular way to define mixed random-quasiperiodic cocycles is the following. Fix an abstract space of symbols $(\Sigmah, \rho)$ (where $\Sigmah$ is a Polish metric space and $\rho \in \Prob_c (\Sigmah)$) and a continuous function $\alfa \colon \Sigmah \to \T^d$. Consider the corresponding mixed quasiperiodic base dynamics $(X \times \T^d, f, \rho^\Z \times m)$ defined in Section~\ref{base}, where $X = \Sigmah^\Z$. A continuous function $\Ascr \colon X \times \T^d \to \SL_m (\R)$ that depends only the coordinate $\om_0$ of $\om \in X$ and on $\theta \in \T^d$ determines the linear cocycle $F = F_{(\alfa, \Ascr)}$ over $f$ given by 
\begin{equation} \label{def 111}
F (\om, \theta, v) = \left( f (\om, \theta), \, \Ascr (\om, \theta) v \right) = 
\left( \sigma \om, \theta + \alfa (\om_0), \Ascr (\om, \theta) v \right) \, .
\end{equation}

Note that since $\Ascr$ depends only on the coordinate $\om_0$  of $\om \in X$ and on $\theta \in \T^d$, it can be identified with the map
$$\Sigmah \ni \om_0 \mapsto \Ascr (\om_0) \in C^0 (\T^d, \SL_m (\R)) , \quad \Ascr (\om_0) (\theta) =  \Ascr (\om, \theta)
\, .$$

Then setting 
$$\muh := \alfa_\ast \rho \times \Ascr_\ast \rho \in \Prob_c (\Gscr)  , $$
we conclude that the cocycle $F_{(\alfa, \Ascr)}$ defined in~\eqref{def 111} can also be realized as a cocycle driven by a measure, namely the push forward measure $\muh \in \Prob_c (\Gscr)$ above.

The space of mixed cocycles $F_{(\alfa, \Ascr)}$ is a metric space with the uniform distance
$$
\dist \left( (\alfa,\Ascr), \, (\alfa',\Ascr') \right)= \norm{\alfa-\alfa'}_0 + \norm{\Ascr-\Ascr'}_0 \, .
$$

Note that the map $ (\alfa,\Ascr) \mapsto \alfa_\ast \rho \times \Ascr_\ast \rho \in \Prob_c (\Gscr)$ is Lipschitz continuous 
(recall that $\Prob_c (\Gscr)$ is equipped with the Wasserstein distance). 
\end{remark}

\subsection{Upper semicontinuity of the Lyapunov exponent}
We derive a nearly uniform upper semicontinuity of the Lyapunov exponent of a mixed cocycle, a technical result in the spirit of~\cite[Proposition 3.1]{DK-book}. This is a type of uniform {\em upper} large deviations estimate, to be employed in future related projects. For now, as a consequence of this estimate, we establish the upper semicontinuity of the Lyapunov exponent as a function of the measure, relative to the Wasserstein distance. 

Fix a number $L < \infty$, let $\Sigmah := \Gscr_L$,  $ X = \Sigmah^\Z$ and consider the mixed random-quasiperiodic dynamics $f$ on $X \times \T^d$.

\begin{theorem}\label{fiber upper ldt}
Let $\muh_0 \in \Prob_c (\Sigmah)$ be an ergodic measure w.r.t. $f$.
Given any $\ep > 0$, there are  $\delta = \delta (\ep, \muh_0, L) > 0$,  $\nbar = \nbar(\ep, \muh_0, L) \in \N$ and $c = c (\ep, \muh_0, L) > 0$ such that for all $\muh \in \Prob_c (\Sigmah)$ with $W_1 (\muh, \muh_0) < \delta$, for all $\theta \in \T^d$ and for all $n \ge \nbar$ we have
\begin{equation}\label{eq fiber upper ldt}
\muh^\Z \left\{ \om \in X \colon  \frac{1}{n} \log \norm{\Ascr^n (\om)  (\theta)} \ge L_1 (\muh_0) + \ep \right\}  <  e^{- c  n}  \, .
\end{equation}

Moreover, the map $\muh \mapsto L_1 (\muh)$ is upper semicontinuous with respect to the Wasserstein metric in the space of ergodic measures. 

\end{theorem}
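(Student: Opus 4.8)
The plan is to prove the large deviations estimate~\eqref{eq fiber upper ldt} first, by mimicking the stopping-time argument from the proof of Theorem~\ref{base ldt}, and then to deduce the upper semicontinuity of $\muh \mapsto L_1 (\muh)$ as a corollary. The key structural feature that makes the stopping-time argument work here, just as for~\eqref{mixed base LDT}, is \emph{subadditivity}: if we set $u_n (\om, \theta) := \log \norm{\Ascr^n (\om) (\theta)}$, then the cocycle identity $\Ascr^{j+k}(\om) = \left( \Ascr^k (\sigma^j \om) \circ \tau_\om^j \right) \Ascr^j (\om)$ gives $u_{j+k}(\om,\theta) \le u_j (\om, \theta) + u_k (\sigma^j \om, \tau_\om^j \theta)$, which is exactly the subadditive analogue of the additive Birkhoff cocycle relation used in Theorem~\ref{base ldt}. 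The quantities are uniformly bounded in the relevant sense: since $\Sigmah = \Gscr_L$, each single step satisfies $0 \le u_1 (\om, \theta) \le \log L =: C_0$ uniformly in $(\om,\theta)$, and more generally $u_n \le n C_0$.

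First I would establish a uniform upper bound on the \emph{Cesàro-type} convergence, namely that for $\muh_0^\Z$-a.e.\ $\om$, the first integer $n(\om) = n(\om,\ep)$ with
$$\frac{1}{n(\om)} \, u_{n(\om)} (\om, \theta) < L_1 (\muh_0) + \ep \qquad \forall\, \theta \in \T^d$$
is well defined and finite. This is the analogue of the definition of $n(\om)$ in Theorem~\ref{base ldt}, but here it rests on a uniform-in-$\theta$ upper statement for $\frac1n u_n$ rather than on Lemma~\ref{Birkhoff LDE1}. The needed input is: for $\muh_0^\Z$-a.e.\ $\om$, $\limsup_{n} \sup_{\theta} \frac1n u_n(\om,\theta) \le L_1(\muh_0)$. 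One derives this the way one derives uniform upper semicontinuity of subadditive cocycles over uniquely ergodic (here: fibered uniquely ergodic in $\theta$) base dynamics: fix a large $N$, let $\phi_N (\om,\theta) := \frac1N u_N (\om,\theta)$, which is continuous, bounded, and \emph{depends on only $N$ coordinates} of $\om$; apply Lemma~\ref{Birkhoff LDE1} to $\phi_N$ to get uniform-in-$\theta$ convergence of its Birkhoff averages to $\int \phi_N \, d(\muh_0^\Z \times m)$; then use the subadditive telescoping $u_n \le \sum_{\ell} u_N \circ (\text{shifts})$ to control $\frac1n u_n$ by a Birkhoff average of $\phi_N$ plus an $O(N/n)$ error, and finally let $N \to \infty$ so that $\int \phi_N \to L_1(\muh_0)$. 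This step — getting the clean uniform-in-$\theta$ upper estimate for $\frac1n u_n$ with an error that is uniform in $\om$ on a full-measure set — is, I expect, the main technical obstacle, since it combines the subadditive telescoping with the unique-ergodicity-in-$\theta$ mechanism of Lemma~\ref{Birkhoff LDE1}; but it is essentially the standard argument behind Furstenberg--Kesten-type upper semicontinuity, adapted to the fibered setting.

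Once $n(\om)$ is in hand, the rest runs exactly parallel to Theorem~\ref{base ldt}: define $\U_m := \{ \om : n(\om) \le m \}$, which is open (by continuity of $\Ascr$ and $f$ and compactness of $\T^d$) and increases to a full $\muh_0^\Z$-measure set, pick $N$ with $\muh_0^\Z(\U_N\comp) < \ep$, note $\U_N$ depends on $k := N$ coordinates, and use Lemma~\ref{Urysohn} together with $W_1 (\muh^k, \muh_0^k) \le k \, W_1(\muh,\muh_0)$ to get $\muh^\Z (\U_N\comp) < 3\ep$ whenever $W_1(\muh,\muh_0) < \delta$. Then build the random stopping-time decomposition $n = n_1 + \ldots + n_p + m$ along the orbit of $\om$ under $\sigma$ (using $n(\sigma^{n_k+\ldots+n_1}\om)$ when that iterate lies in $\U_N$, and $1$ otherwise), and telescope $u_n$ via subadditivity: on each block where $\sigma^{\cdot}\om \in \U_N$ we gain $\le n_{k+1}(L_1(\muh_0) + \ep)$, on each bad step we lose at most $C_0 = \log L$, and the leftover block of length $m < N$ contributes at most $N C_0$. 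This yields, for all $\om$, all $\theta$, and all $n \ge \nbar := N \max\{C_0/\ep, 1\}$,
$$\frac1n u_n(\om,\theta) < L_1(\muh_0) + 2\ep + \frac1n \sum_{j=0}^{n-1} \ind_{\U_N\comp}(\sigma^j \om).$$
The last term depends on $n + k - 1$ coordinates and has the bounded-differences property with constant $2k/n$, so McDiarmid's inequality (\cite[Theorem 3.1]{McD}) gives an exceptional set of $\muh^\Z$-measure $< e^{-c n}$ off which $\frac1n \sum \ind_{\U_N\comp} < \int \ind_{\U_N\comp}\, d\muh^\Z + \ep = \muh^\Z(\U_N\comp) + \ep < 4\ep$, establishing~\eqref{eq fiber upper ldt} (after relabelling $\ep$).

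Finally, for the upper semicontinuity of $\muh \mapsto L_1(\muh)$: fix an ergodic $\muh_0$ and $\ep > 0$, and take $\delta, \nbar, c$ as above. For any ergodic $\muh$ with $W_1(\muh, \muh_0) < \delta$ and any fixed $n \ge \nbar$, integrate~\eqref{eq fiber upper ldt}: writing $\Xfrk_n := \{\om : \frac1n u_n(\om,\theta) \ge L_1(\muh_0) + \ep\}$ and splitting $\int \frac1n u_n \, d(\muh^\Z \times m)$ over $\Xfrk_n$ and its complement — using $\frac1n u_n \le C_0 = \log L$ everywhere on the bad set and $\muh^\Z \times m(\Xfrk_n) < e^{-cn}$ — we get
$$L_1(\muh) \le \int_{X \times \T^d} \frac1n u_n \, d(\muh^\Z \times m) \le L_1(\muh_0) + \ep + C_0 \, e^{-cn},$$
where the first inequality is the subadditive (Fekete) bound $L_1(\muh) = \inf_n \frac1n \int u_n \le \frac1n \int u_n$. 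Letting $n \to \infty$ gives $L_1(\muh) \le L_1(\muh_0) + \ep$ for all ergodic $\muh$ with $W_1(\muh,\muh_0) < \delta$, which is precisely upper semicontinuity at $\muh_0$.
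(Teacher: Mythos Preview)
Your argument is correct and follows the same overall scheme as the paper (subadditivity, stopping-time decomposition, concentration inequality, then integrate for upper semicontinuity), but it departs from the paper in one substantive way.

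You choose to define the stopping time $n(\om)$ \emph{uniformly in $\theta$}, which forces you to first prove that for $\muh_0^\Z$-a.e.\ $\om$, $\limsup_n \sup_\theta \frac1n u_n(\om,\theta) \le L_1(\muh_0)$; you do this correctly via the subadditive telescoping of $u_n$ in terms of $u_N$ together with Lemma~\ref{Birkhoff LDE1} applied to $\phi_N = \frac1N u_N$. The paper, by contrast, explicitly notes that such a uniform-in-$\theta$ analogue of Lemma~\ref{Birkhoff LDE1} for Kingman is not available a priori, and avoids it: it defines $n(\om,\theta)$ depending on both variables (using only Kingman's theorem), takes $\U_N \subset X \times \T^d$, runs the stopping-time decomposition along the $f$-orbit in $X\times\T^d$, and obtains the residual sum $\frac1n\sum_j \ind_{\U_N\comp}(f^j(\om,\theta))$; it then passes to a Lipschitz majorant $g$ of $\ind_{\U_N\comp}$ via Lemma~\ref{Urysohn} and applies Theorem~\ref{base ldt} to $g$ as a black box.

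What each buys: the paper's route is shorter and more modular, since the $\theta$-uniformity and the McDiarmid step are both already packaged inside Theorem~\ref{base ldt}; no new lemma is needed. Your route front-loads the work into an auxiliary uniform upper-limit statement, after which the problem becomes purely a shift-space concentration estimate and you can invoke McDiarmid directly, bypassing Theorem~\ref{base ldt} altogether. Both lead to the same conclusion with the same quality of constants.
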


\begin{proof} The argument is similar to the one used in the proof of Theorem~\ref{base ldt}.  Let
$$a_n (\om, \theta) := \log \norm{ \Ascr^n (\om) (\theta)} $$
and note that the sequence $\{a_n\}_{n\ge1}$ is $f$-subadditive, that is, for all $n, m \in \N$ and $(\om, \theta) \in X\times \T^d$  we have:
$$a_{n+m} (\om, \theta) \le a_n (\om, \theta) + a_m (f^m (\om, \theta)) \, .$$

For $(\om, \theta) \in X \times \T^d$ let $n (\om, \theta)$ be the least positive integer $n$ such that
\begin{equation}\label{eq 100}
\frac{1}{n} a_n (\om, \theta)  <  L_1 (\muh_0) + \ep \, .
\end{equation}

By Kingman's ergodic theorem, $n (\om, \theta)$ is defined for $\muh_0^\Z \times m$-a.e. $(\om, \theta)$ and, moreover, for $m \in \N$, if we denote by
$$\U_m := \left\{ (\om, \theta) \colon n (\om, \theta) \le m\right\} \, ,$$
it follows that $\U_m$ increases to a full $\muh_0^\Z \times m$-measure set as $m\to\infty$.
Then there is $N = N (\ep, \muh_0)$ such that $\muh_0^\Z \times m (\U_N\comp) < \ep$.

We note that a priori we do not have an exact analogue of Lemma~\ref{Birkhoff LDE1}, that is, the uniformity in $\theta$ of the convergence in Kingman's ergodic theorem.\footnote{A posteriori our result provides the {\em upper}  uniformity in $\theta$. We note that a lower uniformity result, and hence uniform convergence in $\theta$ in Kingman's theorem is in general not possible, see also~\cite{Furman1}.}  We perform a stopping time argument corresponding to the behavior of the $f$-orbit of a point $(\om, \theta)$; using the subadditivity of the sequence $\{a_n\}_{n\ge1}$, we eventually reduce the problem to the additive situation in Theorem~\ref{base ldt}.

Let $C = C (L, \muh_0) := \sup \left\{ \log \norm{ \Ascr (\om_0) (\theta)} \colon \om_0 \in \Sigmah, \theta \in \T^d   \right\} < \infty$.

Fix an arbitrary point $(\om, \theta) \in X \times \T^d$ and define inductively the sequence of stopping times $\{n_k = n_k (\om, \theta)\}_{k\ge1}$ as follows.

If $(\om, \theta) \in \U_N$, let $n_1 := n (\om, \theta)$, otherwise $n_1 :=1$.

If $f^{n_1} (\om, \theta) \in \U_N$, let $n_2 := n (f^{n_1} (\om, \theta))$, otherwise $n_2 :=1$.

For $k\ge1$, if $f^{n_k+\ldots+n_1} (\om, \theta) \in \U_N$ then $n_{k+1} := n (f^{n_k+\ldots+n_1} (\om, \theta))$, otherwise $n_{k+1} := 1$. Note that for all $k\ge1$ we have $1\le n_k \le N$ and by~\eqref{eq 100},
\begin{equation*}\label{eq 101}
a_{n_k} (f^{n_1+\ldots+n_{k-1}} (\om, \theta)) \le n_k (L_1 (\muh) + \ep) + C \, \ind_{\U_N\comp} ( f^{n_1+\ldots+n_{k-1}} (\om, \theta) ) \, .
\end{equation*}

Let $\nbar = \nbar (\ep, \muh, \Sigmah) :=  N \max \left\{ \frac{C}{\ep}, 1\right\}$, so $\nbar \ge N \ge n_1$. Fix any $n \ge \nbar$. Since $n_1 < n_1 + n_2 < \ldots < n_1+\ldots + n_k < \ldots $, there is $p \ge 1$ such that
$n =  n_1 + \ldots n_{p} + m$, where $0 \le m < n_{p+1} \le N$.

Using the subadditivity of the sequence $\{a_n\}_{n\ge1}$ it  follows that
\begin{align*}
a_n (\om, \theta)   \le  & \ a_{n_1} (\om, \theta) + a_{n_2} (f^{n_1} (\om, \theta)) + \ldots + a_{n_p} (f^{n_1+\ldots+n_{p-1}} (\om, \theta)) \\
& + a_m (f^{n_1+\ldots+n_{p}} (\om, \theta)) \\
\le & \ (n_1+\ldots n_p) \, (L_1 (\muh) + \ep) + C \sum_{j=0}^{n-1}   \ind_{\U_N\comp} ( f^j (\om, \theta) ) + C N \, .
\end{align*}

Hence for all $(\om, \theta) \in \Sigmah^\Z \times \T^d$ and for all $n \ge \nbar$ we have
$$\frac{1}{n} \, \log \norm{ \Ascr^n (\om) (\theta)}  \le L_1 (\muh) + 2 \ep + C \, \frac{1}{n} \,  \sum_{j=0}^{n-1}   \ind_{\U_N\comp} ( f^j (\om, \theta) ) \, .$$

The closed set $\U_n\comp \subset \Sigmah^\Z \times \T^d$ is determined by the coordinates $\om_0, \ldots, \om_{N-1}$ and $\theta$. Therefore, as in the proof of Theorem~\ref{base ldt}, using Lemma~\ref{Urysohn}, there are an open set $D \supset \U_N$ with $(\muh_0^\Z \times m) \,  (D) < 2 \ep$ and a Lipschitz continuous function $g \colon X \times \T^d \to [0, 1]$
which depend only on the coordinates $\om_0, \ldots, \om_{N-1}, \theta$  such that $\ind_{\U_N\comp} \le \phi \le \ind_D$.

Thus for all $(\om, \theta) \in X\times \T^d$ and $n \ge \nbar$ we have:
$$\frac{1}{n} \,  \sum_{j=0}^{n-1}   \ind_{\U_N\comp} ( f^j (\om, \theta) ) \le
\frac{1}{n} \,  \sum_{j=0}^{n-1}  g ( f^j (\om, \theta) ) \, .$$

The observable $g$ depends of course only on $\ep$ and $\muh_0$. Applying Theorem~\ref{base ldt} to $g$, for any measure $\muh$ on $\Sigmah$ that is sufficiently close (depending on $\ep, \muh_0$) to $\muh_0$ in the Wasserstein distance and for all $\theta \in \T^d$ we have:
\begin{equation*}\label{eq 111}
 \frac{1}{n} \sum_{j=0}^{n-1} g (f^j (\om, \theta) ) <  \int g  \, d(\muh_0^\Z\times m)   +  \ep
 \end{equation*}
for $\om$ outside a set of $\muh^\Z$-measure $< e^{- c  n}$, where $c = c (\ep, \muh_0) > 0$.

Moreover, 
\begin{align*}
\int g  \, d(\muh_0^\Z\times m) & \le \int \ind_D  \, d(\muh_0^\Z\times m) = (\muh_0^\Z\times m) (D) < 2 \ep ,
\end{align*}
which combined with the previous estimates proves~\eqref{eq fiber upper ldt}.

Finally, using the estimate~\eqref{eq fiber upper ldt} and integrating with respect to the measure $\muh^\Z\times m$, where $\muh$ is close enough to $\muh_0$ in the Wasserstein metric, for all large enough $n$ we have
\begin{align*}
\int  \frac{1}{n} \log \norm{\Ascr^n (\om, \theta)} \, d (\muh^\Z\times m)  & \le L_1 (\muh_0) + \ep + L \, e^{- c n} 
\\
& < L_1 (\muh_0) + 2 \ep \, .
\end{align*}

Restricting to measures $\muh$ that are ergodic with respect to $f$ and letting $n \to \infty$ we conclude that $L_1 (\muh) < L_1 (\muh_0) + 2 \ep$.
 \end{proof}

\begin{remark}
A uniform {\em lower} large deviations estimate (and hence a full, uniform large deviations type estimate) that is, a bound like
$$
\muh^\Z \left\{ \om \in X \colon  \frac{1}{n} \log \norm{\Ascr^n (\om)  (\theta)} \le L_1 (\muh_0) - \ep \right\}  <  e^{- c  n}  \, .
$$
cannot hold at this level of generality. 

If it did, then (at least restricting to cocycles defined as in Remark~\ref{alt def}), by the Abstract Continuity Theorem, (see~\cite[Theorem 3.1]{DK-book}) we would conclude that the Lyapunov exponent is a continuous function. However, this is not necessarily the case without stronger assumptions on the data.  

Indeed, let $\muh=\frac{1}{2}\delta_{(0,I)}+\frac{1}{2}\delta_{(\alpha, A)}$ where $(\alpha, A)$ is the quasiperiodic cocycle constructed in~\cite{Wang-You} and shown to be a point of discontinuity of the Lyapunov exponent. Then $L_1(\muh)=\frac{1}{2}L_1(\alpha, A)>0$. However, $\muh$ can be approximated by measures $\muh_n$ with zero Lyapunov exponent, e.g. $\muh_n=\frac{1}{2}\delta_{(0,I)}+\frac{1}{2}\delta_{(\alpha, A_n)}$, where $\{(\alpha,A_n)\}_{n\ge1}$  is the approximating sequence of $(\alpha, A)$ in~\cite{Wang-You}. One may consult~\cite[Section 5]{BPS} for more details.
\end{remark}

\section{A motivating example and future work}\label{roadmap}


The study of linear cocycles in general and of mixed cocycles in particular is motivated in part by their relationship with discrete Schr\"odinger operators (see~\cite{Damanik-survey} for a review of this topic).

Recall the discrete quasiperiodic Schr\"odinger operator given by
 \begin{equation}\label{road 0}
(H_{{\rm qp}} (\theta) \, \psi)_n=- \psi_{n+1} - \psi_{n-1}+ v (\theta+n\alpha) \, \psi_{n}, \quad \forall n\in \Z,
\end{equation}
for some potential function $v \in C^0 (\T^d, \R)$ and ergodic frequency $\alpha\in\T^d$. Given an energy $E\in\R$, consider the corresponding Schr\"odinger cocycle $(\alpha, S_E)$, where $S_E \in C^0 (\T^d, \SL_2 (\R))$,
$$
S_E (\theta)=\begin{pmatrix} v(\theta)-E & -1 \\ 1 & 0 \end{pmatrix} .
$$
Let $\Gscr = \T^d \times C^0 (\T^d, \SL_2 (\R))$ be the space of $\SL_2 (\R)$ valued quasiperiodic cocycles.

We will describe different types of random perturbations of the operator~\eqref{road 0} and the associated mixed Schr\"odinger cocycle.  

Given $\rho \in \Prob_c (\R)$, consider an i.i.d. sequence of random variables $\{w_n\}_{n\in\Z}$ with common law $\rho$. Interpreting $\{w_n\}_{n\in\Z}$ as random perturbations of the quasiperiodic potential $v_n (\theta) = v (\theta + n \alpha)$, we obtain the Schr\"odinger operator
 \begin{equation}\label{road 1}
(H  \, \psi)_n=- \psi_{n+1} - \psi_{n-1} + \left( v (\theta+n\alpha) + w_n \right)  \, \psi_{n}, \quad \forall n\in \Z .
\end{equation} 

Note that putting $P (\om) =\begin{pmatrix} 1 & \om \\ 0 & 1 \end{pmatrix}$, we can write
$$\begin{pmatrix} v(\theta) + \om -E & -1 \\ 1 & 0 \end{pmatrix} =
\begin{pmatrix} 1 & \om \\ 0 & 1 \end{pmatrix} \, \begin{pmatrix} v(\theta)  -E & -1 \\ 1 & 0 \end{pmatrix} 
= 
P(\om) \, S_E (\theta) \, .$$

The Schr\"odinger cocycle associated to the operator~\eqref{road 1} is then the mixed random-quasiperiodic cocycle driven by the measure $\muh_E \in \Prob_c (\Gscr)$ given by
\begin{equation}\label{road 11}
\muh_E = \delta_\alpha \times \int_\R  \delta_{P (\om) S_E} \, d \rho (\om) \, .
\end{equation}

\medskip

A very different model is obtained if instead we randomize the translation by the frequency  $\alpha$. Given $\mu \in \Prob (\T^d)$, let $\{\alpha_n\}_{n\in\Z}$ be an i.i.d. sequence of random variables with common law $\mu$ and consider the Schr\"odinger operator
\begin{equation}\label{road 2}
(H (\theta) \, \psi)_n=- \psi_{n+1} - \psi_{n-1} +  v (\theta+\alpha_0 + \ldots + \alpha_{n-1})  \, \psi_{n}, \quad \forall n\in \Z .
\end{equation} 

The Schr\"odinger cocycle associated to the operator~\eqref{road 2} is the mixed random-quasiperiodic cocycle driven by the measure $\muh_E \in \Prob_c (\Gscr)$ given by
\begin{equation}\label{road 22}
\muh_E = \mu \times  \delta_{S_E}  \, .
\end{equation}
 
\medskip

We may of course randomize both the frequency and the potential, by considering
\begin{equation*}\label{road 3}
(H (\theta) \, \psi)_n=- \psi_{n+1} - \psi_{n-1} +  \left( v (\theta+\alpha_0 + \ldots + \alpha_{n-1}) + w_n \right)  \, \psi_{n}, \ \forall n\in \Z .
\end{equation*} 
The corresponding cocycle is driven by
\begin{equation*}\label{road 33}
\muh_E = \mu \times  \int_\R  \delta_{P (\om) S_E} \, d \rho (\om) \, .
\end{equation*}

\medskip

As mentioned before, one of our  goals is to study the stability of the Lyapunov exponent of a quasiperiodic cocycle under random noise (with appropriate assumptions on the randomness), for Schr\"odinger or more general cocycles. 
To this end, in forthcoming papers we will consider an in depth study of these types of cocycles, as summarized below. 

\medskip

Firstly, we develop results of Furstenberg's theory on products of random matrices for our mixed random-quasiperiodic multiplicative processes. In particular we obtain a Furstenberg-type formula and generic criteria for the continuity as well as the positivity of the maximal Lyapunov exponent. Under general, easily checkable conditions, these criteria are applicable to the  mixed Schr\"odinger cocycles~\eqref{road 11} and~\eqref{road 22} thus establishing the continuity and the positivity of the Lyapunov exponents for all energies $E$. The latter property suggests that in some sense the randomness dominates the quasi-periodicity (under generic assumptions, random multiplicative processes have positive Lyapunov exponents, which is not  always the case for quasiperiodic ones). Furthermore, it will be interesting to see if, as with the Anderson model, the randomness in the  operator~\eqref{road 1} always leads to Anderson localization. This problem will be considered in the future. 

\medskip

The continuity of the Lyapunov exponent mentioned above is not effective, it is only a qualitative result. We will establish the H\"older continuity of the Lyapunov exponent of the Schr\"odinger cocycle~\eqref{road 1}, and in fact for cocycles driven by $\muh = \delta_\alpha \times \rho$, where $\rho$ is a measure on $C^0 (\T^d, \SL_m (\R))$. This is obtained via an abstract continuity theorem (ACT) (see~\cite[Chapter 3]{DK-book}) which depends on the availability of some uniform large deviations type (LDT) estimates on the iterates of the cocycle. The main goal of this future work is deriving such estimates.

We remark that the same problems for cocycles with random frequencies such as~\eqref{road 22}, even under stronger regularity assumptions, so far proved more intractable. 

\medskip
 
With the above LDT estimate for such mixed random-quasiperiodic cocycles at hand, we will then be able to let the amount of randomness tend to zero. More precisely, we will establish the stability under random noise of the LDT estimates for quasiperiodic cocycles. Combined with the ACT, this will prove the stability (i.e., in this case, continuity) of the Lyapunov exponent of quasiperiodic cocycles under random perturbations of the cocycle.

\medskip

Finally, another project will be dedicated to deriving statistical properties for the base mixed random-quasiperiodic dynamics (e.g. large deviations for more general observables and a central limit theorem).

\medskip

\subsection*{Acknowledgments} The second author was supported  by Funda\c{c}\~{a}o para a Ci\^{e}ncia e a Tecnologia, under the projects: UID/MAT/04561/2013 and   PTDC/MAT-PUR/29126/2017.
The third author has been supported by the CNPq research grants 306369/2017-6 and 313777/2020-9.
 
\bigskip


\providecommand{\bysame}{\leavevmode\hbox to3em{\hrulefill}\thinspace}
\providecommand{\MR}{\relax\ifhmode\unskip\space\fi MR }
\providecommand{\MRhref}[2]{%
  \href{http://www.ams.org/mathscinet-getitem?mr=#1}{#2}
}
\providecommand{\href}[2]{#2}

\end{document}